\documentclass{article}
\usepackage[margin=1in]{geometry}
\usepackage{tikz}
\usepackage{graphicx}
\usepackage{amsfonts,amssymb,amsmath,amsthm}
\usepackage{enumerate}
\usepackage{cite}
\usepackage{xcolor}
\usepackage{microtype}
\usepackage{needspace}
\usepackage[colorlinks=true,linkcolor=blue,citecolor=blue,urlcolor=blue]{hyperref}
\usepackage[nameinlink,noabbrev,capitalise]{cleveref}

\newcommand{\DS}{\mathrm{DS}}
\newcommand{\DC}{\mathrm{DC}}
\newcommand{\ex}{\mathrm{ex}}
\newcommand{\hex}{\widehat{\ex}}
\newcommand{\exadm}{\ex_{\mathrm{adm}}}
\newcommand{\Forb}{\operatorname{Forb}}
\newcommand{\supp}{\mathrm{supp}}

\newcommand{\FF}{\mathbb{F}}
\newcommand{\Z}{\mathbb{Z}}

\newtheorem{theorem}{Theorem}[section]
\newtheorem{corollary}[theorem]{Corollary}
\newtheorem{lemma}[theorem]{Lemma}
\newtheorem{proposition}[theorem]{Proposition}
\theoremstyle{definition}

\newtheorem{question}[theorem]{Question}

\theoremstyle{remark}

\title{Balanced even cycles in signed graphs:\\
Tur\'an bounds, double covers, and parity obstructions}
\author{Lujia Wang\thanks{Hainan Bielefeld University of Applied Sciences, lujia.wang@hibiuh.edu.cn}
\and Xiaowei Yu\thanks{Beijing Institute of Technology, Zhuhai, \ xwyu2013@163.com}}
\date{}

\begin{document}
\maketitle

\begin{abstract}
We study Tur\'an problems for balanced even cycles in simple signed graphs,
where signed subgraphs are considered up to switching.  For every balanced
bipartite signed graph, the signed and ordinary Tur\'an numbers differ by at
most a factor of two.  Our main structural results concern the underlying
graphs that admit a signing in which every $2k$-cycle is unbalanced.  We
characterize these graphs by the absence of an odd dependence among their
$2k$-cycle incidence vectors, give a cohomological formulation, and construct
subgraph-minimal obstructions of arbitrarily large order.  In particular,
there is no finite forbidden-subgraph characterization.  We also give an
exact closed-walk criterion for cycles in double covers and derive
a direct signed breadth-first-search upper bound.
As applications, we prove
\[
\hex(n,C_{+4})=\left(\frac{\sqrt2}{2}+o(1)\right)n^{3/2}
\]
and study the signed hexagon number
$R_6(n)=\hex(n,\{C_{-3},C_{+6}\})$.  We characterize the underlying graphs
counted by $R_6$ and express it as an extremal problem for ordinary
$C_6$-free graphs with a prescribed involution.  For every sufficiently
large $n$, we construct examples with $\Omega(n^{4/3})$ edges, and we give
an equivariant construction attaining
the coefficient obtained from the F\"uredi--Naor--Verstra\"ete lower bound
by double-cover transfer.  Finally, we give $n$-vertex $C_{+10}$-free signed
graphs with $\Omega(n^{6/5})$ edges and use octagon examples to illustrate the limitations of
theta-freeness as a signing criterion.
\end{abstract}

\section{Introduction}\label{sec:introduction}

For a graph $F$, the Tur\'an number $\ex(n,F)$ is the maximum number of
edges in an $n$-vertex graph containing no copy of $F$.  When $F$ is
bipartite, the order of magnitude of $\ex(n,F)$ is unknown in many basic
cases; see \cite{FurediSimonovits2013}.  In this paper, we study the
corresponding problem for signed graphs.
We are mainly interested in balanced even cycles and in
the underlying graphs that admit a signing avoiding them.

\subsection{Signed Tur\'an numbers}\label{sec:preliminaries}

All graphs are finite and simple, usually labelled on $[n]=\{1,\ldots,n\}$. For a vector $z$, write $\supp(z)=\{i:z_i\ne0\}$. Given a graph $G$,
we write $v(G)$ and $e(G)$ for the numbers of vertices and edges of $G$.
A \emph{signed graph} is a pair
$\widehat G=(G,\sigma)$, where $\sigma:E(G)\to\{+1,-1\}$.  We call $G$ the
\emph{underlying graph}, or \emph{support}, of $\widehat G$.  Equivalently,
we regard $\widehat G$ as a vector in
$\{-1,0,+1\}^{\binom{[n]}2}$, whose nonzero coordinates record its edge
signs. Therefore, $e(G)=|\supp(\widehat G)|$. We also write
$v(\widehat G)=v(G)$ and $e(\widehat G)=e(G)$. When working
over $\FF_2$, we encode the same signing by $x:E(G)\to\FF_2$, with $x(e)=1$
exactly when $\sigma(e)=-1$.  Let $v$ be a vertex of $\widehat G$. A \emph{vertex-switching} at $v$ reverses the
signs of all edges incident with $v$ in $\widehat G$.  Switching all vertices of a set $S$ reverses exactly the signs of the edges between $S$ and its complement. Two signed graphs are \emph{switching equivalent}, denoted by $\widehat G\equiv\widehat H$, if one is obtained from the other by a sequence of vertex-switchings.

The sign of a walk is the product of its edge signs, counted with
multiplicity.  A closed walk is \emph{balanced} if its sign is positive, and
switching preserves this sign.    We write $C_{+r}$ and $C_{-r}$ for positive and negative signed $r$-cycles.

Let $\widehat{G}$ be a signed graph.
We call $\widehat G$ \emph{all-positive} (or \emph{all-negative}) if all the signs of its edges are positive (or negative). We write $\widehat G$ as $(G,+)$ if $\widehat G$ is all-positive, and write $\widehat G$ as  $(G,-)$ if $\widehat G$ is all-negative. We also write $+G$ for $(G,+)$.
 We call a signed graph  \emph{balanced} if every (signed) cycle
is positive, or equivalently, if it can be switched to an all-positive signed
graph. Otherwise, we call it \emph{unbalanced}. Let $G^+$ and $G^-$ denote the ordinary spanning subgraphs of $G$ formed by the positive and negative edges of $\widehat G$, respectively.
For more notation and terminology of signed graphs, see
\cite{Harary53,Zaslavsky82,NaserasrSopenaZaslavsky2021}.

Let $\mathcal F$ be a family of graphs. We call a graph $\mathcal F$-free
if it contains no subgraph isomorphic to any member of $\mathcal F$.
The \emph{extremal number} of $\mathcal F$, denoted by $\ex(n,\mathcal F)$,
is the maximum number of edges of an $n$-vertex $\mathcal F$-free graph, i.e.,
\[
\ex(n,\mathcal F):=\max\bigl\{e(G):v(G)=n\text{ and $G$ is $\mathcal F$-free}\bigr\}.
\]
Analogously, let $\widehat{\mathcal F}$ be a family of signed graphs.
We call a signed graph $\widehat{\mathcal F}$-free if it contains no
signed subgraph isomorphic to a switching of any member of
$\widehat{\mathcal F}$. The \emph{extremal number} of
$\widehat{\mathcal F}$, denoted by $\hex(n,\widehat{\mathcal F})$, is the
maximum number of edges of an $n$-vertex $\widehat{\mathcal F}$-free signed
graph, i.e.,
\[
\hex(n,\widehat{\mathcal F}):=
\max\bigl\{e(G):v(G)=n\text{ and some signing of $G$ is $\widehat{\mathcal F}$-free}\bigr\}.
\]
For singleton families, we write $\ex(n,F)$ and $\hex(n,\widehat F)$.
If $\widehat F$ is unbalanced, then $(K_n,+)$ is
$\widehat F$-free.  If $\widehat F$ is balanced and $F$ is nonbipartite,
then $(K_n,-)$ is $\widehat F$-free, since its odd cycles are negative.
Thus $\hex(n,\widehat F)=\binom n2$ in both cases, and the nontrivial
problem concerns balanced signings of bipartite graphs.

For every balanced bipartite signed graph $\widehat F$, we have
\begin{equation}\label{eq:intro-comparison}
\ex(n,F)\leq\hex(n,\widehat F)\leq2\ex(n,F).
\end{equation}
Indeed, any signing of an $F$-free graph gives the lower bound.  For the
upper bound, $\widehat F\equiv(F,+)\equiv(F,-)$: the second equivalence
follows by switching one side of a bipartition.  Hence the positive-edge
and negative-edge graphs of a $\widehat F$-free signed graph are both
$F$-free.  For a signed tree $\widehat T$, equality
$\hex(n,\widehat T)=\ex(n,T)$ holds, since every signing of a tree is
balanced.  Balanced even cycles are therefore a natural first family for
which signs can change the extremal problem.

The convention on the host is relevant here.  Several signed Tur\'an
problems require the host to be unbalanced and forbid an unbalanced signed
graph; see, for example, \cite{WangHouLi2024}.  We impose no such condition
on the host, so those dense problems do not give bounds for the parameter
considered here.

\subsection{Balanced even cycles}

For each fixed $k\geq2$, Bondy and Simonovits \cite{BondySimonovits74}
proved that $\ex(n,C_{2k})=O_k(n^{1+1/k})$.  Matching lower bounds are
known for $k=2,3,5$
\cite{ErdosRenyiSos66,Benson66,MR1109426}.  For general $k$, the
construction of Lazebnik, Ustimenko, and Woldar \cite{LUW1995} gives
\begin{equation}\label{eq:LUW-lower}
\ex(n,C_{2k})=\Omega_k\!\left(n^{1+\frac{2}{3k-3+\nu_k}}\right),
\qquad \text{where} \quad
\nu_k=\begin{cases}0,&k\text{ odd},\\1,&k\text{ even}.\end{cases}
\end{equation}
Giving these graphs the all-positive signing yields the same lower bound as in \eqref{eq:LUW-lower}
for $\hex(n,C_{+2k})$.  Conversely, one sign class in any signed graph
has at least half its edges, and a monochromatic even cycle is balanced.
Thus determining the conjectured exponent is equivalent in the signed
and ordinary problems. By \eqref{eq:intro-comparison}, we have
\begin{equation}\label{eq:exponent-equivalence}
\hex(n,C_{+2k})=\Theta_k(n^{1+1/k})
\quad \text{if and only if}\quad
\ex(n,C_{2k})=\Theta_k(n^{1+1/k}).
\end{equation}
The first unresolved case is $k=4$; see
\cite{KeevashSudakovVerstraete2013}.  The general lower bound then has
exponent $6/5$, whereas the conjectured exponent is $5/4$.
In particular, a signed construction
with $\Omega(n^{5/4})$ edges and no balanced octagon would also give an
ordinary $C_8$-free graph with that many edges.

The comparison does not determine which underlying graphs admit a signing
with no balanced $2k$-cycle, or how much signs can increase the leading
constant.  We first address the structural question.  A graph admits the
required signing if and only if every family of $2k$-cycles in which each
edge occurs an even number of times contains an even number of cycles.
This follows from the linear system prescribing the parity on each cycle,
in the classical theory of cycle signs \cite{Zaslavsky81}.  We give a
cohomological formulation and construct subgraph-minimal obstructions of
arbitrarily large order.  Thus the signing condition has no finite
forbidden-subgraph characterization, even though the obstruction on each
fixed graph can be found by linear algebra.

We next develop double-cover and cloning constructions.  The point requiring
care is that a simple cycle in a cover may project to a non-simple closed
walk. We characterize the closed walks that lift to simple cycles.  These tools complete
\Cref{sec:structural-tools}.  In \Cref{sec:signed-BFS}, we derive a general
upper bound directly by breadth-first search in the signed graph.

The final three sections give the applications.  For quadrilaterals we prove
$\hex(n,C_{+4})=(\sqrt2/2+o(1))n^{3/2}$.  For hexagons we study
$R_6(n):=\hex(n,\{C_{-3},C_{+6}\})$.  The signed graphs counted by
$R_6$ have as their positive double covers exactly the ordinary $C_6$-free
graphs with an appropriate involution.  We characterize the possible
supports, construct $n$-vertex examples with $\Omega(n^{4/3})$ edges for
every sufficiently large $n$, and adapt the F\"uredi--Naor--Verstra\"ete
construction while
preserving the cover involution.  Finally, we construct $n$-vertex
$C_{+10}$-free signed graphs with $\Omega(n^{6/5})$ edges and discuss
octagon examples.  In particular, the
parity criterion rules out a suitable signing of an infinite subfamily
of the dense theta-free graphs of Verstra\"ete and Williford \cite{VW2019}.

\section{Structural theory for balanced even cycles}\label{sec:structural-tools}

Fix $k\geq2$.  We first characterize the graphs that admit a signing in
which every $2k$-cycle is unbalanced, and then construct minimal graphs
that fail this condition.  The double-cover tools in the last two
subsections will be used for the lower bounds.

\subsection{Parity and the support class}

For integers $k,t\geq2$, let $\Theta_{k,t}$ be the graph consisting of $t$ internally
vertex-disjoint paths of length $k$ with the same endvertices.  In any
signing of $\Theta_{k,3}$, two of its three paths have the same sign, and
their union is a balanced $2k$-cycle.  Consequently, the support of a
$C_{+2k}$-free signed graph is $\Theta_{k,3}$-free.  This is one instance
of a more general parity obstruction.

Let $\mathcal C_{2k}(G)$ be the set of simple $2k$-cycles of $G$, and let
$\chi_C\in\FF_2^{E(G)}$ be the edge-incidence vector of $C$.  A subfamily
$\mathcal D\subseteq\mathcal C_{2k}(G)$ is a \emph{dependence} if
$\sum_{C\in\mathcal D}\chi_C=0$, or equivalently, if every edge occurs
in an even number of its cycles.  It is \emph{nonzero} if
$\mathcal D\ne\varnothing$ and \emph{odd} if $|\mathcal D|$ is odd.

\begin{theorem}\label{thm:parity-criterion}
A graph $G$ admits a signing in which every $2k$-cycle is unbalanced if
and only if its $2k$-cycles have no odd dependence.  Equivalently, every
subfamily of $\mathcal C_{2k}(G)$ with empty edge symmetric difference
has even cardinality.
\end{theorem}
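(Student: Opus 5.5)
The plan is to translate the signing condition into a linear system over $\FF_2$ and apply the standard solvability criterion (Fredholm alternative) for linear systems. Encode a signing by $x\in\FF_2^{E(G)}$ as in the preliminaries. A cycle $C$ is unbalanced exactly when the number of its negative edges is odd, that is, when $\langle \chi_C, x\rangle = \sum_{e\in C} x(e) = 1$ in $\FF_2$. So $G$ admits a signing in which every $2k$-cycle is unbalanced if and only if the system
\[
\langle \chi_C, x\rangle = 1 \qquad (C\in\mathcal C_{2k}(G))
\]
has a solution $x\in\FF_2^{E(G)}$. Write this as $Mx=\mathbf 1$, where $M$ is the $\mathcal C_{2k}(G)\times E(G)$ matrix whose rows are the vectors $\chi_C$.

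The key step is the alternative for linear systems over a field: $Mx=b$ is solvable if and only if $b\in\operatorname{im}M=(\ker M^{T})^{\perp}$. Equivalently, it is solvable if and only if every $y$ with $y^{T}M=0$ satisfies $y^{T}b=0$. I would justify this directly from $\dim\operatorname{im}M=\operatorname{rank}M=\operatorname{rank}M^T$ together with $\dim(\ker M^T)^{\perp}=|\mathcal C_{2k}(G)|-\dim\ker M^T=\operatorname{rank}M^T$. The inclusion $\operatorname{im}M\subseteq(\ker M^T)^\perp$ is immediate, and equal dimensions give equality. This is valid over $\FF_2$ because the standard dot product on $\FF_2^m$ is nondegenerate, so $\dim W^\perp=m-\dim W$ for every subspace $W$, even though $W\cap W^\perp$ may be nonzero.

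Now identify vectors $y\in\FF_2^{\mathcal C_{2k}(G)}$ with subfamilies $\mathcal D=\supp(y)$. Then $y^TM=\sum_{C\in\mathcal D}\chi_C$, so $y\in\ker M^T$ exactly when $\mathcal D$ is a dependence. Also $y^T\mathbf 1=|\mathcal D| \bmod 2$. Hence the system is solvable if and only if every dependence has even cardinality, that is, if and only if there is no odd dependence.

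The final equivalence in the statement is a restatement. The condition $\sum_{C\in\mathcal D}\chi_C=0$ says that the symmetric difference of the edge sets of the cycles in $\mathcal D$ is empty. The empty family is trivially even, so it causes no trouble.

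No step is a genuine obstacle. The only point needing care is the dimension count for orthogonal complements over $\FF_2$, which uses only nondegeneracy of the form and not positive definiteness. For the easy direction I would also record the one-line argument: if $x$ is a valid signing and $\mathcal D$ is a dependence, then $|\mathcal D|\equiv\sum_{C\in\mathcal D}\langle\chi_C,x\rangle=\langle\sum_{C\in\mathcal D}\chi_C,x\rangle=0$.
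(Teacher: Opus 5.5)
Your proposal is correct and follows the paper's proof: both reduce the signing condition to the consistency of $Mx=\mathbf 1$ over $\FF_2$, identify the vectors $y$ with $y^TM=0$ with dependences, and note that $y^T\mathbf 1$ is the parity of $|\mathcal D|$, with the forward direction given by the same sign-product argument. The only addition is your explicit dimension-count proof of $\operatorname{im}M=(\ker M^T)^\perp$ over $\FF_2$, which the paper takes as standard.
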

\begin{proof}
Suppose first that  $G$ admits a signing in which every $2k$-cycle is unbalanced, and let $\mathcal D$ be a
dependence.  Multiplying the signs of its cycles counts every edge sign
an even number of times, so the product is positive.  Since each cycle
is unbalanced, this product is $(-1)^{|\mathcal D|}$, and hence
$|\mathcal D|$ is even.

For the converse, we express the signing conditions as a linear system.
Let $M$ be the matrix over $\FF_2$ whose row indexed by $C$ is $\chi_C$.
In the binary encoding of a signing, $C$ is unbalanced exactly when
$\sum_{e\in E(C)}x(e)=1$.  Thus the required signing is a solution of
\[
Mx=\mathbf1.
\]
This system is consistent if and only if every $y$ with $y^TM=0$ satisfies
$y^T\mathbf1=0$.  The nonzero coordinates of $y$ select a dependence,
and $y^T\mathbf1$ is the parity of its cardinality.  By assumption this
parity is zero for every dependence.  The system is therefore consistent,
and any solution gives the required signing.
\end{proof}

Define $\mathcal A_{2k}$ to be the family of graphs admitting a
$C_{+2k}$-free signing, and let $\mathcal B_{2k}$ consist of the graphs
that are subgraph-minimal outside $\mathcal A_{2k}$.  Write
$\Forb(\mathcal B)$ for the family of graphs containing no member of $\mathcal B$ as a subgraph.  Since restricting a signing preserves the required
property, $\mathcal A_{2k}$ is closed under taking subgraphs.  Every graph
outside it contains a minimal obstruction, so
\begin{equation}\label{eq:support-turan-reformulation}
\mathcal A_{2k}=\Forb(\mathcal B_{2k}),
\qquad
\hex(n,C_{+2k})=\ex(n,\mathcal B_{2k}).
\end{equation}
Here, every graph in $\mathcal A_{2k}$ admits some signing which is $C_{+2k}$-free, but a
specified signing need not be $C_{+2k}$-free.

The three $2k$-cycles of $\Theta_{k,3}$ form an odd dependence, but
theta-freeness alone does not suffice.  For example, $K_4$ contains no
$\Theta_{2,3}=K_{2,3}$, whereas its three $4$-cycles form an odd
dependence.  We will also use the following version of this observation.

\begin{proposition}\label{prop:K4-subdivision-obstruction}
Let $F$ be a subdivision of $K_4$ such that, for each perfect matching
of $K_4$, the two replacement paths have total length $k$.  Then
$F\notin\mathcal A_{2k}$.
\end{proposition}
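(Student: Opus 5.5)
We apply \Cref{thm:parity-criterion}: it suffices to exhibit an odd dependence among the $2k$-cycles of $F$. The natural candidate is the family of three cycles of $F$ that come from the three $4$-cycles of $K_4$, exactly as in the $K_4$ example preceding the statement.

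Label the vertices of $K_4$ as $a,b,c,d$. Write $P_{uv}$ for the path of $F$ replacing the edge $uv$, and $\ell_{uv}$ for its length. The three perfect matchings of $K_4$ are $\{ab,cd\}$, $\{ac,bd\}$ and $\{ad,bc\}$. By hypothesis,
\[
\ell_{ab}+\ell_{cd}=\ell_{ac}+\ell_{bd}=\ell_{ad}+\ell_{bc}=k.
\]
Each $4$-cycle of $K_4$ is the complement of one perfect matching, so it is the union of the other two matchings. For example, the $4$-cycle $acbd$ uses the edges $ac,cb,bd,da$, which is the complement of $\{ab,cd\}$.

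Replacing each edge of such a $4$-cycle by its path in $F$ gives a closed walk. This walk is a simple cycle, because the replacement paths are internally disjoint and meet only at the branch vertices $a,b,c,d$, and the cycle passes through four distinct branch vertices. Its length is the sum of the lengths of two matchings, namely $k+k=2k$. This gives three $2k$-cycles $D_{ab,cd}$, $D_{ac,bd}$, $D_{ad,bc}$ of $F$, indexed by the matching they omit.

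Each path $P_{uv}$ lies in exactly two of these three cycles, namely the two that do not omit the matching containing $uv$. Hence every edge of $F$ lies in exactly two of the cycles, so their incidence vectors sum to zero over $\FF_2$. The three cycles are distinct, because each omits a different matching, so they form an odd dependence. By \Cref{thm:parity-criterion}, $F$ admits no signing in which every $2k$-cycle is unbalanced, i.e.\ $F\notin\mathcal A_{2k}$.

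There is no real obstacle. The only points needing care are that each combined closed walk is a simple cycle of length exactly $2k$, which uses the internal disjointness of the subdivision paths, and that the three cycles are distinct.
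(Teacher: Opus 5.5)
Your proof is correct and takes the same approach as the paper: the three $2k$-cycles formed by pairs of replacement matchings cover every edge exactly twice, so they form an odd dependence, and \Cref{thm:parity-criterion} gives $F\notin\mathcal A_{2k}$. Your checks that each union is a simple cycle of length $2k$ and that the three cycles are distinct spell out details the paper leaves implicit.
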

\begin{proof}
The union of any two replacement matchings is a $2k$-cycle.  Each edge
belongs to exactly two of these three cycles, which therefore form an odd
dependence.  By applying \Cref{thm:parity-criterion}, we have $F\notin\mathcal A_{2k}$.
\end{proof}

\subsection{Cohomology and minimal obstructions}

The parity criterion can be expressed in cohomological terms.  We give
the construction explicitly so that the relation with the signing
equations is clear.

Form the \emph{$2k$-cycle complex} $X_{2k}(G)$ by taking $G$ as its
one-skeleton and attaching one two-cell $f_C$ along each
$C\in\mathcal C_{2k}(G)$.  All chains and cochains in this subsection
have coefficients in $\FF_2$, so orientations of edges and cells do not
matter.  The boundary map satisfies
\[
\partial_2 f_C=\sum_{e\in E(C)}e.
\]
Consequently, a two-chain selects a family of cycle-cells, and it is a
cellular two-cycle precisely when that family is a dependence.  The term
\emph{cellular two-cycle} here refers to a zero-boundary sum of cells;
it does not mean another simple cycle in the graph.

A one-cochain assigns a binary value to each edge.  Its coboundary
evaluates on $f_C$ by adding those values around $C$.  Thus, in the edge
and cell bases, the coboundary map $\delta^1$ is the matrix $M$ from
\Cref{thm:parity-criterion}, while $\partial_2$ is its transpose.  Define
the constant two-cochain $\omega_{2k}$ by
\[
\omega_{2k}(f_C)=1\qquad(C\in\mathcal C_{2k}(G)).
\]
Since the complex has no three-cells, every two-cochain is a cocycle.
The class $[\omega_{2k}]$ is therefore defined in
$H^2(X_{2k}(G);\FF_2)$.

\begin{theorem}[Cohomological signing criterion]\label{thm:cohomological-signing}
For a graph $G$, the following are equivalent:
\begin{enumerate}[(i)]
\item $G$ admits a signing in which every $2k$-cycle is unbalanced;
\item $\omega_{2k}$ is a coboundary, that is,
$\omega_{2k}\in\operatorname{im}\delta^1$;
\item $[\omega_{2k}]=0$ in $H^2(X_{2k}(G);\FF_2)$;
\item every cellular two-cycle of $X_{2k}(G)$ contains an even number of
two-cells.
\end{enumerate}
\end{theorem}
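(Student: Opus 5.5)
The plan is to prove the chain (i)$\Leftrightarrow$(ii)$\Leftrightarrow$(iii)$\Leftrightarrow$(iv) by identifying each statement with a statement about the linear system $Mx=\mathbf 1$ from \Cref{thm:parity-criterion}. We use the dictionary set up above: in the edge and cell bases, $\delta^1$ is the matrix $M$ and $\partial_2=M^T$, and the two-cochain $\omega_{2k}$ is the all-ones vector $\mathbf 1\in\FF_2^{\mathcal C_{2k}(G)}$.

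For (i)$\Leftrightarrow$(ii), encode a signing by $x:E(G)\to\FF_2$ as before. Then $(\delta^1x)(f_C)=\sum_{e\in E(C)}x(e)$, and this equals $1$ exactly when $C$ is unbalanced. Hence a signing makes every $2k$-cycle unbalanced if and only if $\delta^1x=\omega_{2k}$. Such an $x$ exists if and only if $\omega_{2k}\in\operatorname{im}\delta^1$.

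For (ii)$\Leftrightarrow$(iii), the complex has no three-cells, so $\ker\delta^2$ is the whole two-cochain space. Therefore $H^2(X_{2k}(G);\FF_2)=C^2/\operatorname{im}\delta^1$. The class of $\omega_{2k}$ vanishes precisely when $\omega_{2k}$ is a coboundary, so this equivalence is immediate from the definitions.

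For (ii)$\Leftrightarrow$(iv), use the finite-dimensional duality $\operatorname{im}M=(\ker M^T)^{\perp}$ over $\FF_2$ under the standard dot product. This is the consistency criterion already invoked in the proof of \Cref{thm:parity-criterion}. Now $\ker M^T=\ker\partial_2$ is the space of cellular two-cycles $z=\sum_{C\in\mathcal D}f_C$, that is, the dependences $\mathcal D$. Also $\langle z,\omega_{2k}\rangle=|\mathcal D|\bmod 2$. So $\omega_{2k}\in\operatorname{im}\delta^1$ if and only if every cellular two-cycle has an even number of cells, which is (iv).

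Alternatively, (i)$\Leftrightarrow$(iv) is literally \Cref{thm:parity-criterion} once cellular two-cycles are identified with dependences, and that identification closes the cycle of implications. No step is a genuine obstacle. The only point needing care is the bookkeeping: over $\FF_2$ orientations are irrelevant, $\partial_2$ is exactly the transpose of $\delta^1$ in these bases, and the evaluation pairing of a two-chain with $\omega_{2k}$ counts its cells modulo $2$.
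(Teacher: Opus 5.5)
Your proposal is correct and matches the paper's proof step for step: (i)$\Leftrightarrow$(ii) reads $\delta^1x=\omega_{2k}$ as the signing condition, (ii)$\Leftrightarrow$(iii) uses the absence of three-cells, and (ii)$\Leftrightarrow$(iv) uses $\operatorname{im}\delta^1=(\ker\partial_2)^\perp$, where pairing a cellular two-cycle with $\omega_{2k}$ counts its cells modulo $2$. Your side remark that (i)$\Leftrightarrow$(iv) is just \Cref{thm:parity-criterion}, once cellular two-cycles are identified with dependences, is also valid.
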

\begin{proof}
Regard the binary signing $x$ as a one-cochain.  On each cycle-cell,
\[
(\delta^1x)(f_C)=\sum_{e\in E(C)}x(e).
\]
Thus a suitable signing is exactly a solution of
$\delta^1x=\omega_{2k}$, proving the equivalence of (i) and (ii).
The equivalence of (ii) and (iii) follows from the definition of the
cohomology class.  Under the chain--cochain pairing,
\[
\operatorname{im}\delta^1=(\ker\partial_2)^\perp.
\]
Hence $\omega_{2k}$ is a coboundary if and only if it evaluates to zero
on every cellular two-cycle.  Its evaluation is the parity of the number
of selected cells, giving (iv).
\end{proof}

For fixed $k$, the criterion is effective: list the $2k$-cycles and solve
$Mx=\mathbf1$.  A solution gives a signing, while inconsistency gives
an odd dependence as an explicit obstruction.  The cohomological form
also describes all solutions up to switching.  If $x_0$ is one solution,
then the solution set is $x_0+\ker\delta^1$.  A switching set is encoded
by a zero-cochain $f$, and switching changes the edge values by
$\delta^0f$, where $(\delta^0f)(uv)=f(u)+f(v)$.  Consequently, the
switching classes of solutions form an affine space modelled on
\[
H^1(X_{2k}(G);\FF_2)=\ker\delta^1/\operatorname{im}\delta^0.
\]
This also shows directly why switching leaves the cycle conditions
unchanged: every vertex of a cycle is counted twice.

The same argument applies when signs are prescribed on any chosen family
of cycles.  Attach a two-cell along each of them and give the corresponding
two-cochain its prescribed parity, $0$ for a positive cycle and $1$ for a
negative cycle.  A signing exists if and only if this cochain vanishes on
every cellular two-cycle.  We use this mixed-parity version for triangles
and hexagons in \Cref{sec:hexagons}.

Although the criterion is a finite linear system on each graph, there is
no bound on the size of a minimal obstruction.  We prove this by arranging
the $2k$-cycles as the faces of a torus.

\begin{theorem}[Unbounded minimal obstructions]\label{thm:unbounded-minimal-obstructions}
For every positive integer $L$, there is a finite simple graph $T$ with
the following properties:
\begin{enumerate}[(i)]
\item $T$ admits no signing in which every $2k$-cycle is unbalanced;
\item the only nonzero dependence among its $2k$-cycles is the family of
all its $2k$-cycles;
\item this family has odd cardinality greater than $L$;
\item every proper subgraph of $T$ admits a signing in which every
$2k$-cycle is unbalanced.
\end{enumerate}
In particular, $T\in\mathcal B_{2k}$.
\end{theorem}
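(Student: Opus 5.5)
The plan is to realise $T$ as a cellulation of the torus whose $2$-cells are exactly the $2k$-cycles of $T$. Then the cellular two-cycles of $X_{2k}(T)$ are governed by $H_2(\text{torus};\FF_2)\cong\FF_2$, which is generated by the sum of all faces.

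\textbf{Construction.} Fix integers $a,b\ge1$ with $a+b=k$. Choose odd integers $m,n>\max(2k,L)$. Start from the toroidal grid $C_m\square C_n$, embedded in the torus with $mn$ square faces. Subdivide every horizontal edge into a path of length $a$ and every vertical edge into a path of length $b$, and call the result $T$. Since $m,n\ge3$, $T$ is simple. Each face boundary becomes a cycle of length $2a+2b=2k$. There are $mn$ faces, which is odd and larger than $L$. Every edge of $T$ lies on exactly two faces, so the family of all faces is an odd dependence, and (i) follows from \Cref{thm:parity-criterion}.

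\textbf{Key step: the only $2k$-cycles are the faces.} Every cycle of $T$ is the subdivision of a cycle $Z$ in the grid. If $Z$ uses $h$ horizontal and $v$ vertical grid edges, its length in $T$ is $ah+bv$.

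If $Z$ is noncontractible on the torus, then $h\ge m$ or $v\ge n$. Its length is then at least $\min(am,bn)>2k$.

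If $Z$ is contractible, it bounds a disc made of grid cells. Since $m,n>2k$, this disc lifts to a rectilinear polygon in $\Z^2$, so $h$ and $v$ are both even and at least $2$. If $h=v=2$, then $Z$ is the boundary of a single cell, i.e.\ a face. Otherwise $h\ge4$ or $v\ge4$, and the length is at least $2a+2b+2\min(a,b)>2k$.

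I expect this cycle-classification to be the main obstacle. The delicate points are justifying the lift to the plane and the claim that only unit squares have $h=v=2$. A rectilinear simple polygon with exactly two horizontal and two vertical unit edges is a unit square, since its width and height are each at most $1$.

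\textbf{Dependences and minimality.} With $\mathcal C_{2k}(T)$ equal to the set of faces, let $\mathcal D$ be a nonzero dependence. Suppose some face is in $\mathcal D$ and an adjacent face, sharing an edge $e$ with it, is not. Then $e$ lies in exactly one cycle of $\mathcal D$, a contradiction. Because the dual graph of the cellulation is connected, $\mathcal D$ must therefore be all faces, which gives (ii) and (iii).

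For (iv), it suffices to treat $T-e$ for an arbitrary edge $e$, since every proper subgraph lies in some such graph and $\mathcal A_{2k}$ is subgraph-closed. The $2k$-cycles of $T-e$ are the faces of $T$ that avoid $e$. Any dependence among them is also a dependence in $T$, so by (ii) it is empty or the full family. The full family contains the two faces through $e$, so the dependence must be empty. Hence $T-e$ has no odd dependence, and \Cref{thm:parity-criterion} gives the required signing. Thus $T\in\mathcal B_{2k}$.
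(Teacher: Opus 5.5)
Your proposal is correct and is essentially the paper's proof: the same subdivided toroidal grid (the paper fixes $a=1$, $b=k-1$), the same dual-connectivity argument showing the full face family is the only nonzero dependence, and the same edge-deletion argument for minimality. The one minor difference is in classifying the $2k$-cycles: the paper avoids the contractible/noncontractible split by noting that a $2k$-cycle uses at most $2k$ grid edges, so its lift to the infinite grid must close because $m,n>2k$.
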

\begin{proof}
Choose odd integers $m,n>2k$ with $mn>L$.  Starting with the toroidal grid
$C_m\square C_n$, leave each horizontal edge unchanged and replace each
vertical edge by a path of length $k-1$, using distinct internal vertices.
The resulting graph $T$ has $(k-1)mn$ vertices.  Each square face becomes
a $2k$-cycle, consisting of two horizontal edges and two replacement
paths.  Let $\mathcal Q$ be the family of these $mn$ facial cycles; see
\Cref{fig:toroidal-obstruction}.

We first show that every simple $2k$-cycle of $T$ belongs to $\mathcal Q$.
Such a cycle uses a whole replacement path whenever it enters an internal
vertex.  Suppressing the internal vertices of the replacement paths gives a simple cycle in
$C_m\square C_n$.  If it uses $h$ horizontal and $v$ vertical edges, then
$h+(k-1)v=2k$, and its unweighted length is at most $2k$.  Lift it to a
path in the infinite square grid.  Its displacement is a multiple of $m$
in one direction and a multiple of $n$ in the other.  Both have absolute
value at most $2k$, so $m,n>2k$ force the lift to close.  It is a simple
cycle, and hence $h$ and $v$ are positive even integers.  Therefore
\[
2k=h+(k-1)v\geq2+2(k-1)=2k.
\]
Equality gives $h=v=2$, so the image is a square face and the original
cycle belongs to $\mathcal Q$.

Now let $\mathcal D\subseteq\mathcal Q$ be a dependence.  Every grid edge
lies in exactly two faces.  Its even multiplicity in $\mathcal D$ implies
that these two faces are either both selected or both omitted.  The dual
toroidal grid is connected, so either all faces or no faces are selected.
Thus the only nonzero dependence is $\mathcal Q$.  It has odd cardinality
$mn>L$, and \Cref{thm:parity-criterion} gives $T\notin\mathcal A_{2k}$.

Every vertex of $T$ has positive degree and every edge belongs to a facial
cycle.  A proper subgraph therefore omits an edge and retains only a proper
subfamily of $\mathcal Q$.  That subfamily has no nonzero dependence, so
the subgraph belongs to $\mathcal A_{2k}$ by the same criterion.  Hence
$T\in\mathcal B_{2k}$.
\end{proof}

\begin{figure}[htbp]
\centering
\begin{tikzpicture}[x=0.85cm,y=0.85cm]
  \fill[gray!12] (0,0) rectangle (1.5,1);
  \foreach \x in {0,1.5,4.5,6}
    \draw (\x,0)--(\x,3);
  \foreach \y in {0,1,2,3}
    \draw (0,\y)--(6,\y);
  \foreach \y in {0,1,2,3}
    \node[fill=white,inner sep=3pt] at (3,\y) {$\cdots$};
  \foreach \x in {0,1.5,4.5,6}
    \node[fill=white,inner sep=2pt] at (\x,1.5) {$\vdots$};
  \foreach \x in {0,1.5,4.5,6}
    \foreach \y in {0,1,2,3}
      \fill (\x,\y) circle (1.5pt);
  \foreach \y in {0,3}
    \draw[->,thick] (4.8,\y)--(5.5,\y);
  \foreach \x in {0,6}
    \draw[->,thick] (\x,2.15)--(\x,2.8);
  \node at (0.75,0.5) {$f_C$};
  \node[below] at (0.75,0) {$1$};
  \node[left] at (0,0.5) {$k-1$};
  \node[above] at (3,3.25) {$m\text{ columns}$};
  \node[right] at (6.35,1.5) {$n\text{ rows}$};
\end{tikzpicture}
\caption{A fundamental rectangle for $T$, with opposite sides identified.
Each horizontal edge has length $1$ and each vertical segment represents
a path of length $k-1$; the omitted rows and columns give $m,n>2k$, both odd.}
\label{fig:toroidal-obstruction}
\end{figure}

The sum of all face-cells is the mod-$2$ fundamental two-cycle of the
torus.  Its odd number of cells explains the obstruction in cohomological
terms.  Taking $L\geq3$ also gives a $\Theta_{k,3}$-free graph outside
$\mathcal A_{2k}$ for every $k$: a theta subgraph would give a dependence
of three $2k$-cycles, contradicting the theorem.

\begin{corollary}[No finite forbidden-subgraph characterization]\label{cor:no-finite-obstruction-list}
For every $k\geq2$, the family $\mathcal B_{2k}$ is infinite.  There is
no finite family $\mathcal F_k$ such that
$\mathcal A_{2k}=\Forb(\mathcal F_k)$.
\end{corollary}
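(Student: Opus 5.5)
The corollary should follow directly from \Cref{thm:unbounded-minimal-obstructions}, together with the reformulation \eqref{eq:support-turan-reformulation}. The plan has two parts. First I show that $\mathcal B_{2k}$ contains graphs of unbounded size. Then I show that any forbidden family characterizing $\mathcal A_{2k}$ must contain a graph below each minimal obstruction, which rules out a finite family.

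For infinitude, fix $k\geq2$ and apply \Cref{thm:unbounded-minimal-obstructions} with $L=1,2,3,\ldots$. This gives graphs $T_L\in\mathcal B_{2k}$ whose number of $2k$-cycles, an odd number $mn$, exceeds $L$. A graph on $N$ vertices has at most $\binom{N}{2k}(2k)!$ cycles of length $2k$, so it cannot have arbitrarily many. Hence the orders $v(T_L)$ are unbounded, and $\mathcal B_{2k}$ contains infinitely many pairwise non-isomorphic graphs. Alternatively, the construction has $(k-1)mn$ vertices, and that already grows with $L$.

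For the second claim, suppose $\mathcal F_k$ is finite with $\mathcal A_{2k}=\Forb(\mathcal F_k)$. I take each $B\in\mathcal B_{2k}$ in turn.
\begin{enumerate}[(1)]
\item Since $B\notin\mathcal A_{2k}$, the graph $B$ contains some $F\in\mathcal F_k$ as a subgraph.
\item Each $F\in\mathcal F_k$ lies outside $\mathcal A_{2k}$, because $F$ contains itself. If $F$ were a proper subgraph of $B$, it would lie in $\mathcal A_{2k}$ by the minimality of $B$, which is a contradiction.
\item Therefore $F\cong B$. Here I should be careful: isolated vertices do not affect membership. Minimality in \Cref{thm:unbounded-minimal-obstructions}(iv) ranges over all proper subgraphs, including those obtained by deleting a vertex, so any minimal obstruction has no isolated vertices and the argument is unaffected.
\end{enumerate}
Thus every member of $\mathcal B_{2k}$ is isomorphic to a member of $\mathcal F_k$. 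This contradicts the infinitude of $\mathcal B_{2k}$.

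The only delicate point is counting $\mathcal B_{2k}$ up to isomorphism rather than as labelled graphs. This is handled by the unbounded orders, so I expect no real obstacle beyond stating that carefully.
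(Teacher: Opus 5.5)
Your proof is correct and follows the paper's argument. The orders from \Cref{thm:unbounded-minimal-obstructions} are unbounded, and minimality forces any member of $\mathcal F_k$ contained in some $B\in\mathcal B_{2k}$ to be all of $B$; the paper phrases this by choosing $T\in\mathcal B_{2k}$ larger than every member of $\mathcal F_k$, while you conclude that $\mathcal B_{2k}$ embeds in $\mathcal F_k$ up to isomorphism, which is the same reasoning.
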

\begin{proof}
The construction gives members of $\mathcal B_{2k}$ of arbitrarily large
order.  If a finite family $\mathcal F_k$ characterized $\mathcal A_{2k}$,
each of its members would lie outside $\mathcal A_{2k}$.  Choose
$T\in\mathcal B_{2k}$ larger than every member of $\mathcal F_k$.  Then
$T$ must contain one of them as a proper subgraph, contrary to its
minimality.
\end{proof}

\subsection{Double covers and lift-simple walks}\label{subsec:double-covers}

Given a signed graph $\widehat G=(G,\sigma)$, its \emph{double switching
graph} $\DS(\widehat G)$ is the signed graph on
$V(G)^+\sqcup V(G)^-$, where $V(G)^\pm=\{v^\pm:v\in V(G)\}$.
Each edge $uv$ gives four edges: $u^+v^+$ and $u^-v^-$ have sign
$\sigma(uv)$, while $u^+v^-$ and $u^-v^+$ have sign $-\sigma(uv)$.
Choosing $v^-$ at each switched vertex and $v^+$ elsewhere realizes every
switching of $\widehat G$ inside $\DS(\widehat G)$: an edge has
opposite-sheet endpoints exactly when one of its endpoints has been
switched \cite[Section~5.2]{NaserasrSopenaZaslavsky2021}.

The \emph{positive double cover}, or simply \emph{double cover},
$\DC(\widehat G):=\DS(\widehat G)^+$ is the ordinary spanning graph
formed by the positive edges of $\DS(\widehat G)$. Thus a positive edge
$uv$ gives the edges $u^+v^+$ and $u^-v^-$ in the cover, while a negative
edge gives $u^+v^-$ and $u^-v^+$.
The map $\tau(v^+)=v^-$, $\tau(v^-)=v^+$ is its
\emph{deck involution}. Switching at $v$ interchanges $v^+$ and $v^-$,
so the double cover depends only on the switching class, up to isomorphism
\cite{Zaslavsky82,Zaslavsky98}.
Every balanced subgraph lifts to two disjoint copies of its underlying
graph. In particular, if $\DC(\widehat G)$ is $H$-free, then
$\widehat G$ is $+H$-free.
The double switching graph has $2v(G)$ vertices and $4e(G)$ edges,
whereas the double cover has $2v(G)$ vertices and $2e(G)$ edges.
Hence an $H$-free double cover $D=\DC(\widehat G)$ with $2n$ vertices
and $C(2n)^\alpha$ edges gives
\begin{equation}\label{eq:cover-transfer}
\hex(n,+H)\geq e(G)=\frac{e(D)}2=2^{\alpha-1}Cn^\alpha.
\end{equation}
Thus a construction in an ordinary graph gives a signed lower bound once
we identify it as the positive double cover of a simple signed graph.

For the converse construction, we need the following criterion
\cite[Theorem~1]{Zelinka1982}.

\begin{proposition}\label{prop:admissible-involution}
Let $D$ be a simple graph with a fixed-point-free involutory automorphism
$\tau$.  There is a simple signed graph $\widehat G$ and an isomorphism
$D\cong\DC(\widehat G)$ carrying $\tau$ to the deck involution if and only
if
\begin{enumerate}[(i)]
\item $v\tau(v)\notin E(D)$ for every $v\in V(D)$; and
\item no two distinct $\tau$-orbits span all four possible edges.
\end{enumerate}

\end{proposition}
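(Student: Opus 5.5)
Both directions are handled by working orbit by orbit.

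\emph{Necessity.} Let $D=\DC(\widehat G)$ with $\tau$ the deck involution. Condition (i) holds because every edge of $D$ projects to an edge $uv$ of $G$ with $u\ne v$, since $G$ is simple and has no loops; so $v^+v^-$ is never an edge. For (ii), let $\{u^+,u^-\}$ and $\{v^+,v^-\}$ be two distinct orbits. The edges of $D$ between them come from edges of $G$ between $u$ and $v$. As $G$ has no parallel edges, there is at most one such edge $uv$. It contributes either $\{u^+v^+,u^-v^-\}$ or $\{u^+v^-,u^-v^+\}$, never all four. Any isomorphism carrying $\tau$ to the deck involution preserves both conditions, so they are necessary.

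\emph{Sufficiency.} Suppose $D$ and $\tau$ satisfy (i) and (ii).
\begin{enumerate}[(a)]
\item Let $G$ have one vertex $v$ for each $\tau$-orbit.
\item Choose a labelling $v^+,v^-$ of each orbit; this choice plays the role of a switching.
\item Put an edge $uv$ in $G$ whenever some edge of $D$ joins the two orbits, and assign its sign as follows.
\begin{itemize}
\item Since $\tau$ is an automorphism, the edges between two orbits come in $\tau$-pairs.
\item If $u^+v^+\in E(D)$, then $u^-v^-\in E(D)$ as well, and we give $uv$ the sign $+$.
\item If $u^+v^-\in E(D)$, then $u^-v^+\in E(D)$, and we give $uv$ the sign $-$.
\end{itemize}
\end{enumerate}
Condition (ii) says both pairs cannot occur together, so the sign is well defined. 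Condition (i) rules out edges inside an orbit, which would become loops. Each pair of orbits yields at most one edge, so $G$ is simple.

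The map sending each vertex of $D$ to its labelled name in $\DC(\widehat G)$ is then a bijection. By construction it maps $E(D)$ onto $E(\DC(\widehat G))$, and it intertwines $\tau$ with the deck involution.

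The only point needing care is that the edges between two orbits form a union of full $\tau$-orbits of edges, which follows from $\tau$ being an automorphism. The rest is bookkeeping.
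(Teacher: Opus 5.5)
Your proposal is correct and follows essentially the same argument as the paper. For necessity, each edge of a simple signed graph lifts to a single perfect matching between two distinct fibres. For sufficiency, you choose a representative (your $v^+$) in each orbit, use the fact that edges between two orbits come in $\tau$-pairs so that (ii) leaves exactly one matching, and sign the quotient edge according to whether that matching joins the representatives, with (i) excluding loops and (ii) excluding parallel edges of opposite sign.
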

\begin{proof}
The conditions hold in every positive double cover: mates are not adjacent,
and each quotient edge lifts to one perfect matching between its two vertex
fibres.  Conversely, take the $\tau$-orbits as quotient vertices and choose
a representative of each orbit.  An edge between two orbits occurs together
with its $\tau$-image.  Thus a nonempty set of edges between the orbits is
either one perfect matching or all four possible edges, and (ii) excludes
the second case.  Sign the quotient edge positively if its matching joins
the representatives, and negatively if it joins a representative to the
mate of the other.  Condition (i) excludes loops, and (ii) excludes parallel
signed edges.  The resulting positive double cover is $D$, with $\tau$
acting as its deck involution.
\end{proof}

Let $D$ be a simple graph with a fixed-point-free involutory automorphism
$\tau$. We call the involution  $\tau$ \emph{admissible} if it satisfies (i) and (ii) in \Cref{prop:admissible-involution}.

A balanced $m$-cycle lifts to two $m$-cycles, while a negative $m$-cycle
lifts to a single $2m$-cycle: traversing it once changes sheets, and
traversing it again closes the lift.  More generally, a simple cycle in
the double cover can project to a non-simple closed walk.  We now characterize
exactly the walks that arise this way.

Let $W=v_0e_1v_1\cdots e_mv_m$ be a closed walk with $v_m=v_0$ and
$m\geq3$, and write $W[i,j]$ for its subwalk from $v_i$ to $v_j$.  We call
$W$ \emph{lift-simple} if it is balanced and every subwalk $W[i,j]$ with
$0\leq i<j<m$ and $v_i=v_j$ is negative.  The first condition makes the
lift close; the second ensures that a return to a base vertex before the
walk ends reaches the other vertex in its fibre.

\begin{theorem}\label{thm:cycle-lift}
A closed walk of length $m\geq3$ lifts to a simple $m$-cycle in
$\DC(\widehat G)$ if and only if it is lift-simple.  Thus the double cover is
$C_m$-free if and only if $\widehat G$ has no lift-simple closed walk of
length $m$.
\end{theorem}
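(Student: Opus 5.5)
The plan is to reduce everything to the covering-space behaviour of walks in $\DC(\widehat G)$: walks lift uniquely once a starting lift is chosen, and the sheet at each point is recorded by the sign of the prefix.

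\textbf{Lifting rule.} Fix a closed walk $W=v_0e_1v_1\cdots e_mv_m$ in $G$ and a starting lift $v_0^{+}$. By the definition of the cover, a positive edge keeps the sheet and a negative edge changes it. So there is exactly one walk $\widetilde W$ in $\DC(\widehat G)$ that starts at $v_0^+$ and projects to $W$, and its $i$th vertex is $v_i^{s_i}$, where $s_i$ is the sign of the prefix $W[0,i]$. Starting at $v_0^-$ gives the $\tau$-image, so the choice is harmless. In the other direction, every simple $m$-cycle in the cover projects to a closed walk of length $m$ in $G$. Its consecutive edges are distinct, since a cover edge determines the base edge and the cover cycle does not backtrack. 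Hence the $m$-cycles of $\DC(\widehat G)$ are exactly the lifts $\widetilde W$ that are closed and simple.

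\textbf{Balanced iff the lift closes.} The lift is closed exactly when $s_m=+$, that is, when $W$ is balanced.

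\textbf{Repeated vertices.} For $0\le i<j<m$, the lifted vertices $\widetilde v_i$ and $\widetilde v_j$ coincide iff $v_i=v_j$ and $s_i=s_j$. Since $s_is_j$ is the sign of $W[i,j]$, this happens iff $v_i=v_j$ and $W[i,j]$ is positive. So the vertices $\widetilde v_0,\dots,\widetilde v_{m-1}$ are distinct exactly when every subwalk returning to its starting base vertex is negative. A closed walk of length $m\ge3$ with $m$ distinct vertices is a simple $m$-cycle; simplicity of the graph means no extra care is needed for edges. This proves the first statement.

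\textbf{Second statement and main obstacle.} The second statement follows at once: the cover contains $C_m$ iff some closed walk of length $m$ in $G$ is lift-simple. The only subtle step is the converse direction, namely that a cover cycle really projects to a walk whose lift is that same cycle. This follows from uniqueness of lifts with a prescribed start, together with the fact that the projection preserves length.
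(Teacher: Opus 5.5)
Your proposal is correct and follows essentially the same argument as the paper. The paper records the sheet of the lift by the $\FF_2$ prefix sum $s_j-s_i=\sum_{t=i+1}^{j}x(e_t)$, which is your prefix-sign rule written additively. It then deduces closure from balance, and distinctness of the lifts of $v_i$ and $v_j$ from negativity of $W[i,j]$, exactly as you do, with projection of a cover cycle giving the converse.
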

\begin{proof}
Identify the two sheets with $\FF_2$ and use the binary edge labels $x(e)$.
In the lift starting at $(v_0,0)$, the sheet coordinate $s_i$ satisfies
\[
s_j-s_i=\sum_{t=i+1}^{j}x(e_t)\qquad\text{in }\FF_2.
\]
The sum counts edges with their multiplicities in the walk.  Taking
$i=0$ and $j=m$ shows that the lift closes precisely when $W$ is balanced.
For $0\leq i<j<m$, the lifted vertices can coincide only if $v_i=v_j$.
In that case they are distinct precisely when $s_i\ne s_j$, or equivalently
when $W[i,j]$ is negative.  Thus the closed lift is simple exactly under
the stated condition.  Conversely, projecting a simple cycle gives these
same conditions, proving the final assertion.
\end{proof}

For example, join two vertex-disjoint negative triangles by an edge $xy$
of either sign.  Traversing the first triangle, $xy$, the second triangle,
and $yx$ gives a lift-simple walk of length $8$: its total sign is positive,
and the returns to $x$ and $y$ traverse negative triangles.  Its cover
therefore contains $C_8$, although the signed graph has no simple balanced
cycle.  The case of $C_6$ admits a more precise description, which we give
in \Cref{sec:hexagons}.

\subsection{Equivariant cloning}

We next give the construction used for $C_{+6}$ and $C_{+10}$.  For a
bipartite graph $B=(X,Y;E)$, its \emph{one-sided $r$-clone} $B[r]$ replaces
each $x\in X$ by $r$ nonadjacent vertices $x_1,\ldots,x_r$ with the same
neighbours as $x$ \cite{LUW1999}.  Cloning can create short cycles even
when the base has large girth.  The point of the next theorem is that fewer
than $k$ clones cannot create a cycle of the particular length $2k$.
The involutions then make this construction a double cover.

\begin{theorem}\label{thm:equivariant-cloning}\label{lem:multicloning}
Let $k\geq3$, $1\leq r\leq k-1$, and let $B=(X,Y;E)$ be bipartite with
girth greater than $2k$.  Then $B[r]$ is $C_{2k}$-free.  Suppose further that
fixed-point-free involutions $\alpha_X$ of $X$ and $\alpha_Y$ of $Y$
satisfy
\begin{enumerate}[(i)]
\item $xy\in E$ if and only if $\alpha_X(x)\alpha_Y(y)\in E$; and
\item no $x\in X$ is adjacent to both $y$ and $\alpha_Y(y)$.
\end{enumerate}
Then there is a simple signed graph $\widehat Q$ whose underlying graph is bipartite,
where
\[
\DC(\widehat Q)\cong B[r],\qquad
v(\widehat Q)=\frac{r|X|+|Y|}{2},\qquad
e(\widehat Q)=\frac{r e(B)}2.
\]
In particular, $\widehat Q$ is $C_{+2k}$-free.
\end{theorem}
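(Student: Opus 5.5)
The proof has two parts. First we show that $B[r]$ is $C_{2k}$-free. Then we build an admissible involution on $B[r]$ and apply \Cref{prop:admissible-involution}.

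For $C_{2k}$-freeness, let $C$ be a $2k$-cycle in $B[r]$ and let $\pi:B[r]\to B$ be the map sending each clone $x_i$ to $x$. Then $\pi(C)$ is a closed walk of length $2k$ in $B$. Consecutive vertices of $C$ lie on opposite sides, so the walk has no immediate backtrack through a $Y$-vertex. A backtrack $y\,x\,y$ is impossible, because $x_i$ has only one copy of each neighbour $y$. Backtracks can only occur at $Y$-vertices, in the form $x_i\,y\,x_j$ with $i\ne j$, which projects to $x\,y\,x$.

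Since $B$ has girth greater than $2k$, the subgraph traced by $\pi(C)$ contains no cycle, so it is a tree. The walk therefore traverses each edge of this tree an even number of times. We aim to show that a closed walk of length $2k$ on a tree that can be lifted to a simple cycle in $B[r]$ needs at least $k$ distinct clones of some single $x$.

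The key observation is the following. Take a leaf $x$ of the traced tree, or more generally the tree structure at an $X$-vertex, and count how often $C$ passes through copies of $x$. Each visit uses two edges of $\pi(C)$ at $x$. Each edge at $x$ is used an even number of times. Each visit to the fibre of $x$ uses a distinct clone, since $C$ is simple, so there are at most $r$ visits. In a tree, a closed walk crosses each edge $e$ at least twice, and the edges split into $Y$-side and $X$-side.

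The cleanest route is a pigeonhole count. A $Y$-vertex $y$ of $\pi(C)$ is visited once by the simple cycle $C$, because $y$ is not cloned. Each edge $xy$ of the tree is traversed at least twice, and consecutive traversals through $y$ use the same $y$. Hence the two traversals of an edge at a leaf $y$ force a backtrack at $y$, i.e.\ $y$ is visited twice, which is a contradiction. So every leaf of the tree is an $X$-vertex. For a star-like tree rooted at a $Y$-vertex we would need many clones, so I would instead argue directly that every $Y$-vertex has at least two tree neighbours. Then each $y$ contributes exactly one visit, of degree $2$ in the walk, and so the walk's degree at $y$ is exactly $2$. Consequently each edge at $y$ is used exactly once. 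This contradicts the fact that every tree edge is used an even number of times, unless there are no $Y$-vertices at all, which is impossible.

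That argument seems to prove freeness without using $r\le k-1$, which is suspicious. So I expect the real subtlety, and the main obstacle, to be that $Y$-vertices may be repeated after all. In the one-sided clone only $X$ is cloned, so $y$ appears once in $C$ and the argument above stands. I will double-check the convention against \cite{LUW1999}. If cloning is in fact needed for the bound, the obstacle is the careful count showing that at most $k-1$ clones per $x$ cannot realize an even tree-walk of length $2k$. There I would use that each $X$-vertex of degree $d$ in the tree needs $d$ visits.

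For the double-cover part, define $\tau$ on $B[r]$ by $\tau(x_i)=\alpha_X(x)_i$ and $\tau(y)=\alpha_Y(y)$. This map is a fixed-point-free involution, and by (i) it is an automorphism. We then check the conditions of \Cref{prop:admissible-involution}. For condition (i), $x_i$ is not adjacent to its mate because both lie in $X$, and similarly for $Y$. For condition (ii), the only orbit pairs with edges are $\{x_i,\alpha(x)_i\}$ and $\{y,\alpha(y)\}$, and all four edges between them would require $x$ to be adjacent to both $y$ and $\alpha_Y(y)$, which (ii) of the theorem excludes.

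This gives $\widehat Q$ with $\DC(\widehat Q)\cong B[r]$. Its vertex count is half of $r|X|+|Y|$ and its edge count is half of $re(B)$. The quotient stays bipartite because $\tau$ preserves the sides. Finally, $\widehat Q$ is $C_{+2k}$-free because a balanced $2k$-cycle would lift to a $C_{2k}$ in $B[r]$, which the first part rules out.
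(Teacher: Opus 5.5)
\paragraph{Gap in the $C_{2k}$-freeness part.} There is a genuine gap in the proof that $B[r]$ is $C_{2k}$-free, which is the only place where $r\le k-1$ enters. Your step ``the two traversals of an edge at a leaf $y$ force a backtrack at $y$, i.e.\ $y$ is visited twice'' is false. A backtrack $x\,y\,x$ visits $y$ only \emph{once}, and it lifts to $x_i\,y\,x_j$ with $i\ne j$. This is exactly how cloning creates short cycles.

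So nothing supports your claim that every $Y$-vertex has at least two tree neighbours, and the contradiction you then derive, which never uses $r$, is spurious. It would prove a false statement: for $r\ge k$, any $x$ of degree at least $k$ gives $K_{k,k}\subseteq B[r]$, which contains $C_{2k}$. Your remark that ``the argument above stands'' is therefore wrong. The count you defer to the end is precisely the missing content.

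\paragraph{How to close it.} You already have all the ingredients; the inference just has to run the other way.
\begin{enumerate}[(1)]
\item Since $y$ occurs once in the cyclic sequence of $C$, exactly two edge traversals of the projected walk are incident with $y$.
\item Every edge of the tree $T$ is traversed at least twice. Hence $2d_T(y)\le 2$, so every $Y$-vertex of $T$ is a leaf. Your own observation that $d_T(y)\ge2$ would force each edge at $y$ to be used once, contradicting evenness, is really a proof of this fact.
\item In a tree inside a bipartite graph, the path between two distinct $X$-vertices has an internal $Y$-vertex of tree-degree at least two. Therefore $T$ has a single $X$-vertex $x$.
\item The $k$ vertices of $C$ on the cloned side are distinct, because $C$ is simple, and all project to $x$. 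So they are $k$ distinct clones of $x$, which is impossible when $r\le k-1$. This is your own ``at most $r$ visits to a fibre'' observation, used at the right moment.
\end{enumerate}

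\paragraph{The double-cover part.} This part of your proposal is correct and matches the paper's argument: the definition of $\tau$, the two admissibility checks, the vertex and edge counts, and bipartiteness of the quotient.
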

\begin{proof}
Suppose first that $B[r]$ contains a $2k$-cycle, and project its clones
back to $B$.  The graph formed by the distinct vertices and edges of the
resulting closed walk $W$ is connected.  If it contained a cycle, that
cycle would have length at most $2k$, contrary to the girth assumption.
Thus this graph is a tree $T$.  Every edge of $T$ separates it into two
components, so the closed walk crosses that edge equally often in the two
directions and at least twice in total.

Fix $y\in Y\cap V(T)$.  At least $2d_T(y)$ edge traversals are incident
with $y$.  On the other hand, $y$ occurs only once in the cyclic vertex
sequence of $W$, since the $Y$-side was not cloned and the original cycle
was simple.  This occurrence accounts for two traversals, giving
$2d_T(y)\leq2$ and hence $d_T(y)=1$.
Thus every $Y$-vertex of $T$ is a leaf.  A path between two distinct
$X$-vertices would have an internal $Y$-vertex, so $T$ has only one
$X$-vertex.  All $k$ vertices of the original cycle on the cloned side
must then be distinct clones of that vertex, contrary to $r<k$.

Define $\tau(x_i)=\alpha_X(x)_i$ and $\tau(y)=\alpha_Y(y)$.  This is a
fixed-point-free involutory automorphism of $B[r]$.  Its orbits lie in the
two sides, so no vertex is adjacent to its mate.  Two orbits could span
all four edges only if some $x$ were adjacent to both $y$ and
$\alpha_Y(y)$.  Thus \Cref{prop:admissible-involution} gives a simple
signed quotient.  The orbits of the cloned side and of $Y$ give its
bipartition.  Every vertex orbit has size two and every quotient edge has
two lifts, giving the stated vertex and edge counts.  Finally, the first
part shows that its double cover is $C_{2k}$-free.  A balanced $2k$-cycle
in the quotient would lift to a $2k$-cycle there, proving the result.
\end{proof}

\section{General upper bounds}\label{sec:signed-BFS}

The theta obstruction in \Cref{sec:structural-tools} and
\eqref{eq:intro-comparison} together give
\[
\hex(n,C_{+2k})\leq
\min\{\ex(n,\Theta_{k,3}),2\ex(n,C_{2k})\}=O_k(n^{1+1/k}).
\]
The exponent follows from the theorem of Bondy and Simonovits
\cite{BondySimonovits74}.  Pikhurko's bound
$\ex(n,C_{2k})\leq(k-1)n^{1+1/k}+16(k-1)n$ \cite{Pikhurko2012} gives
a signed upper bound with leading coefficient $2(k-1)$ and linear term
$32(k-1)n$.  For sufficiently large $k$, applying
\eqref{eq:intro-comparison} to He's bound \cite{He2021} gives the smaller
signed leading coefficient $32\sqrt{5k\log k}+o(1)$.
We prove a direct signed breadth-first-search
bound with the same leading coefficient $2(k-1)$ and linear term
$8(k-1)n$, together with a term of order $n^{1-1/k}$.  The proof follows
the ordinary method of finding a subgraph of large minimum degree and then
forcing its first $k$ breadth-first-search levels to expand.

For disjoint vertex sets $A,B$, let $G(A,B)$ be the bipartite graph on
$A\cup B$ formed by the edges between $A$ and $B$.  A \emph{chorded cycle} is a cycle together with
an edge joining two nonconsecutive vertices.  We use two ordinary graph
facts.  First, average degree at least an integer $s>3$ forces a chorded
cycle of length at least $s+1$ \cite[Proposition~3.2]{Verstraete2016}.
Second, if $H$ is a chorded cycle of length $h$ and $(A,B)$ is a nontrivial
partition of $V(H)$, then $H$ has an $A$--$B$ path of every length
$1,\ldots,h-1$, unless $H$ is bipartite with bipartition $(A,B)$
\cite[Lemma~2]{Verstraete2000}.

The signed step is to switch all search-tree edges positive.  An even
path using only one sign is then positive and can be closed through the
tree.  We state the resulting range of balanced cycle lengths, since it
explains how the layer argument forces one prescribed length.

\begin{lemma}\label{lem:monochromatic-BFS-closure}
Let $T$ be a breadth-first-search tree in the underlying graph of
$\widehat G$, with levels $L_i$, and switch $\widehat G$ so that every
edge of $T$ is positive.  Suppose that one sign class contains a chorded
cycle $H$ of length $h$, either in $G[L_i]$ or in $G(L_i,L_{i+1})$,
where $i\geq1$.  Then, for some integer $1\leq m\leq i$, the signed
graph contains balanced cycles of lengths
\[
2m+2,\ 2m+4,\ \ldots,\ 2m+2\left\lfloor\frac{h-1}{2}\right\rfloor.
\]
In particular, if $k>i$ is an integer and $h\geq2k-1$, then $\widehat G$ contains
$C_{+2k}$.
\end{lemma}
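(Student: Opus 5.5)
The plan is the standard Bondy--Simonovits/Verstra\"ete layer argument. The only signed ingredient is the observation before the lemma: once every tree edge is positive, a monochromatic path of even length together with two tree paths forms a balanced closed walk.

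\emph{Choosing $m$.} Let $r$ be the root of $T$, so $L_0=\{r\}$. For a vertex $u$ at level at least $j$, write $a_j(u)$ for its ancestor at level $j$. Let $t$ be the largest level such that all vertices of $V(H)$ have a common ancestor $w$ at level $t$; such a level exists because $r$ is a common ancestor.

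I claim $t\leq i-1$. If $H\subseteq G[L_i]$, this holds because $H$ has at least two vertices. If $H\subseteq G(L_i,L_{i+1})$ and $t\geq i$, then $V(H)\cap L_i$ is a single vertex, so $H$ is a star, which contradicts $H$ being a cycle.

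Set $m=i-t$, so $1\leq m\leq i$. By the choice of $t$, the ancestors $a_{t+1}(u)$ for $u\in V(H)$ are not all equal. Pick one value $c$ of $a_{t+1}$ and put $A=\{u\in V(H):a_{t+1}(u)=c\}$ and $B=V(H)\setminus A$. This is a nontrivial partition of $V(H)$. For $a\in A$ and $b\in B$, the tree paths from $a$ and from $b$ up to $w$ meet only at $w$.

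\emph{Closing paths.} Apply \cite[Lemma~2]{Verstraete2000} to $H$ and the partition $(A,B)$. In the non-exceptional case, $H$ has an $A$--$B$ path $P$ of every length $\ell\in\{1,\ldots,h-1\}$. Take the even values $\ell=2,4,\ldots,2\lfloor (h-1)/2\rfloor$.

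Suppose $H\subseteq G[L_i]$. For each such $P$, with ends $a\in A$ and $b\in B$, close it with the tree paths from $a$ and $b$ to $w$. These paths use only levels below $i$ apart from their endpoints, so they are internally disjoint from $P$. The result is a simple cycle of length $\ell+2m$. Its sign is $\sigma_0^{\ell}=+1$, where $\sigma_0$ is the common sign of $H$'s edges, because the tree edges are positive. This produces exactly the lengths $2m+2,\ldots,2m+2\lfloor (h-1)/2\rfloor$.

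Next consider the exceptional case, where $H$ is bipartite with bipartition $(A,B)$. This cannot occur in $G[L_i]$ unless all edges of $H$ go between different branches below $w$. I would avoid it by choosing $c$ differently, for instance by merging several ancestor classes so that some edge of $H$ has both ends in $A$. If all edges of $H$ cross between distinct branches, then any single branch class $A$ together with its complement $B$ already contains an edge of $H$ inside $B$ whenever there are at least three classes. If there are only two classes, the chord and the cycle structure must still be handled by hand.

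\emph{Main obstacle.} The main obstacle is the bipartite case $H\subseteq G(L_i,L_{i+1})$. Even-length $A$--$B$ paths there have both ends on the same side. If both ends lie in $L_{i+1}$, the tree paths pass through their parents in $L_i$, and these parents may lie on $P$, which destroys simplicity. My first attempt would be to partition by ancestors but to apply the path lemma to a modified chorded cycle: contract each $L_{i+1}$-vertex's tree edge conceptually, so that only paths with both ends in $L_i$ are used. This may change the offset from $2m$ to $2m+2$, and such a shift is harmless because the lemma only asserts the range for \emph{some} $1\leq m\leq i$. If that fails, I would fall back to Verstra\"ete's original treatment of the bipartite layer in \cite{Verstraete2000}, checking only that the sign computation survives.

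\emph{Final assertion.} If $k>i$ and $h\geq 2k-1$, then $\lfloor (h-1)/2\rfloor\geq k-1$. Since $1\leq m\leq i<k$, the value $2k$ lies in the range $2m+2,\ldots,2m+2(k-1)$, so $\widehat G$ contains $C_{+2k}$.
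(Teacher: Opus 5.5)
There is a genuine gap. Your argument is correct for a nonbipartite $H\subseteq G[L_i]$, and the final deduction is correct. The bipartite cases, however, are left open. One of them, $H\subseteq G(L_i,L_{i+1})$, is always bipartite, and it is exactly the case the proof of \Cref{thm:signed-BFS-bound} needs to bound $e(G(L_i,L_{i+1}))$.

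The trouble is your choice of partition. You take the common ancestor $w$ of all of $V(H)$ and split $V(H)$ by the branches below $w$. This does not rule out the exceptional case of the path lemma: with two branches forming the bipartition, merging classes only gives the trivial partition. Nor does it force even paths to have both ends in $L_i$. Your proposed repair does not work as stated either. Closing through an end in $L_{i+1}$ passes through its parent, which may lie on $P$. The shift of offset to $2m+2$ is also not harmless: it can give $m+1=i+1$, and when $i=k-1$ the length $2k$ then falls outside the range.

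The missing idea is to build the partition from a colour class and take the common ancestor of that class only. When $H$ is bipartite:
\begin{enumerate}[(i)]
\item Let $X$ be a colour class lying in $L_i$; for $H\subseteq G(L_i,L_{i+1})$ this is $X=V(H)\cap L_i$.
\item Let $r$ be the lowest common ancestor of $X$ alone, not of $V(H)$.
\item Let $A$ be the vertices of $X$ in one child branch of $r$, and put $B=V(H)\setminus A$.
\end{enumerate}
Since $\varnothing\ne A\subsetneq X$, the partition $(A,B)$ is not the bipartition, so the path lemma applies.

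By parity, every even $A$--$B$ path ends in $X\setminus A$, that is, in $L_i$ and in another child branch of $r$. The tree path between its ends has length $2m$, where $m$ is the distance from $r$ to $L_i$, so $1\leq m\leq i$. Its interior lies in levels above $L_i$, so it is disjoint from $P\subseteq L_i\cup L_{i+1}$ except at the ends. The two close up to a simple balanced cycle of length $2m+\ell$, exactly as in your nonbipartite argument. This handles both bipartite cases uniformly and never shifts $m$.
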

\begin{proof}
We first choose a set $X\subseteq V(H)\cap L_i$.  If
$H\subseteq G[L_i]$ is not bipartite, put $X=V(H)$.  If $H$ is
bipartite, take one colour class contained in $L_i$: either class will do
when $H\subseteq G[L_i]$, and take $X=V(H)\cap L_i$ when
$H\subseteq G(L_i,L_{i+1})$.  In every case $|X|\geq2$.

Let $r$ be the lowest common ancestor of $X$ in $T$.  Since the vertices
of $X$ are distinct and lie in one level, $r$ lies in an earlier level and
at least two of its child branches meet $X$.  Let $A$ be the vertices of
$X$ in one such branch, and put $B=V(H)\setminus A$.  This partition is
nontrivial.  If $H$ is not bipartite, the exceptional case of the path
lemma cannot occur.  If $H$ is bipartite, $A$ is a proper subset of its
colour class $X$, so $(A,B)$ is not its bipartition.  The path lemma
therefore gives $A$--$B$ paths of every even length less than $h$.
Their endpoints lie in $L_i$ and in different child branches of $r$:
in the bipartite case, an even path starting in $A$ must end in
$X\setminus A$.

Write $m$ for the distance from $r$ to $L_i$, so $1\leq m\leq i$.
For every $1\leq j\leq\lfloor(h-1)/2\rfloor$, choose one of these paths
$P_j$ of length $2j$.  The tree path $Q_j$ between its ends passes through
$r$ and has length $2m$.  Every internal vertex of $Q_j$ lies in a level
earlier than $L_i$, whereas $P_j$ lies in $L_i$ or in
$L_i\cup L_{i+1}$.  Hence the paths meet only at their ends, and
$P_j\cup Q_j$ is a simple cycle of length $2m+2j$.

All edges of $Q_j$ are positive.  If the common sign of the edges of
$H$ is $\varepsilon$, then $\sigma(P_j)=\varepsilon^{2j}=+1$ as well.
Thus all these cycles are balanced.  Finally, when $i<k$ and
$h\geq2k-1$, we have
\[
1\leq k-m\leq k-1\leq\left\lfloor\frac{h-1}{2}\right\rfloor.
\]
Taking $j=k-m$ gives a balanced $2k$-cycle.
\end{proof}

\begin{theorem}\label{thm:signed-BFS-bound}
For every $k\geq3$ and $n\geq2$,
\[
\hex(n,C_{+2k})\leq
2(k-1)n^{1+1/k}+8(k-1)n+2(k-1)n^{1-1/k}.
\]
\end{theorem}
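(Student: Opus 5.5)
We argue by contradiction, following the ordinary Bondy--Simonovits/Pikhurko scheme. Suppose $\widehat G$ is a $C_{+2k}$-free signed graph on $n$ vertices with
\[
e(G)>2(k-1)n^{1+1/k}+8(k-1)n+2(k-1)n^{1-1/k}.
\]
Put $d=2e(G)/n$. The first step is standard: repeatedly delete vertices of degree less than $d/2$. This yields a nonempty subgraph $G'$ with minimum degree $\delta\geq d/2$ and $n'\leq n$ vertices. Set
\[
D:=\delta > 2(k-1)n^{1/k}+8(k-1)+2(k-1)n^{-1/k}.
\]
We fix a root $v$, take a breadth-first-search tree $T$ in $G'$ with levels $L_0=\{v\},L_1,\dots,L_k$, and switch so that all edges of $T$ are positive.

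The key step is to show that no sign class can contain much inside one level or between consecutive levels. By \Cref{lem:monochromatic-BFS-closure}, for each $1\leq i\leq k-1$, neither $G'[L_i]$ nor $G'(L_i,L_{i+1})$ can contain, in one sign class, a chorded cycle of length at least $2k-1$. By Verstraete's proposition, a graph with no chorded cycle of length $\geq 2k-1$ has average degree less than $2k-2$ once $2k-2>3$, which holds for $k\geq3$. Hence each sign class of these graphs has fewer than $(k-1)|V|$ edges, and both classes together have fewer than $2(k-1)|V|$ edges. This gives the factor $2(k-1)$ in the statement. Concretely,
\[
e(L_i)<2(k-1)|L_i|,\qquad e(L_i,L_{i+1})<2(k-1)(|L_i|+|L_{i+1}|).
\]

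Next we prove expansion by induction: $|L_{i+1}|\geq c\,|L_i|$ with $c$ roughly $D/(2(k-1))-O(1)$. Each vertex of $L_i$ has at least $D$ edges, each going to $L_{i-1}$, $L_i$ or $L_{i+1}$. The edges into $L_{i-1}$ number at most $e(L_{i-1},L_i)$, which is controlled by the previous step. Summing the degrees over $L_i$ and using the bounds above gives
\[
D|L_i|\leq 2(k-1)\bigl(|L_{i-1}|+3|L_i|+|L_{i+1}|\bigr)
\]
together with the layered estimates. Combined with the inductive hypothesis $|L_{i-1}|\leq|L_i|/c$, this yields $|L_{i+1}|\geq\bigl(D/(2(k-1))-4\bigr)|L_i|$ up to lower-order terms. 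The level $L_1$ has size at least $D$. Chaining the expansion out to level $k$ gives
\[
n\geq|L_k|>\Bigl(\tfrac{D}{2(k-1)}-4\Bigr)^{k-1}D.
\]
The displayed lower bound on $D$ makes the right-hand side exceed $n$: we have $D/(2(k-1))-4> n^{1/k}+n^{-1/k}$, and $(x+1/x)^{k-1}\cdot 2(k-1)x\geq x^k$ suffices with $x=n^{1/k}$. This is the contradiction.

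The main obstacle is the bookkeeping at the boundary cases. For $i=0$, the lemma requires $i\geq1$, but $L_0$ is a single vertex, so this case is trivial. For the last step $G'(L_{k-1},L_k)$, we still have $i=k-1<k$, so the lemma applies. The delicate part is choosing the expansion recursion so that the error terms produce exactly the constants $8(k-1)n$ and $2(k-1)n^{1-1/k}$. I would first try the cleaner ratio argument, bounding $|L_{i+1}|/|L_i|$ below by $D/(2(k-1))-4$ by induction, and then adjust the constants in the final inequality; if that ratio is too lossy, I would follow Pikhurko's exact layer inequality instead.
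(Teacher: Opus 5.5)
Your strategy is the paper's: pass to a subgraph of minimum degree $D>cx+4c+c/x$, where $c=2(k-1)$ and $x=n^{1/k}$, switch a breadth-first-search tree positive, and bound $e(L_i)\le c|L_i|$ and $e(L_i,L_{i+1})\le c(|L_i|+|L_{i+1}|)$ for $0\le i<k$ by applying \Cref{lem:monochromatic-BFS-closure} and the chorded-cycle fact to one sign class. That part is correct.

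The gap is in the expansion step. This step is exactly where the constants $8(k-1)$ and $2(k-1)$ are decided, and as written it does not go through. First, the degree sum over $L_i$ is $e(L_{i-1},L_i)+2e(L_i)+e(L_i,L_{i+1})\le c\bigl(|L_{i-1}|+4|L_i|+|L_{i+1}|\bigr)$. The term $c|L_i|$ occurs once in each of the two bipartite bounds and twice from the level bound, so the coefficient is $4$, not $3$. Second, this yields only $|L_{i+1}|\ge(D/c-4)|L_i|-|L_{i-1}|$. So neither the ratio $D/c-4$ nor the chain $|L_k|>(D/c-4)^{k-1}D$ is justified. The back-edge term is not negligible at this precision. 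If you absorb it via $|L_{i-1}|\le|L_i|$, the ratio drops to $D/c-5$. Then, for instance with $k=100$, $n=10^{100}$ and $D$ just above the threshold, the chain only gives a lower bound of about $10^{98.4}$ for $|L_k|$, which is less than $n$.

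The missing step is to take the target growth rate itself as the induction hypothesis: prove $a_i>x\,a_{i-1}$ for $1\le i\le k$, where $a_i=|L_i|$. The base case is $a_1\ge D>x$. If $a_i>x\,a_{i-1}$ for some $1\le i\le k-1$, then $c\,a_{i-1}<c\,a_i/x$, and the corrected degree sum gives
\[
c\,a_{i+1}\ge(D-4c)a_i-c\,a_{i-1}>\Bigl(D-4c-\frac cx\Bigr)a_i>c\,x\,a_i .
\]
Hence $a_{i+1}>x\,a_i$, and $a_k>x^k=n$ gives the contradiction. This is the paper's argument. The three terms $cx$, $4c$ and $c/x$ of the degree threshold pay, respectively, for the expansion factor, the four copies of $c|L_i|$ in the degree sum, and the back edges into $L_{i-1}$. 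With this hypothesis, your final estimate with $(x+1/x)^{k-1}$ and any appeal to Pikhurko's finer recursion become unnecessary.
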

\begin{proof}
Put $c=2(k-1)$.  We first establish the layer estimates and then use them
to prove expansion.

\emph{Layer bounds.}  In any breadth-first-search tree of a
$C_{+2k}$-free signed graph, the levels satisfy
\begin{equation}\label{eq:signed-layer-bounds}
e(G[L_i])\leq c|L_i|,\qquad
e(G(L_i,L_{i+1}))\leq c(|L_i|+|L_{i+1}|)
\quad(0\leq i<k).
\end{equation}
For $i=0$, there are no edges within $L_0$, and the number between $L_0$
and $L_1$ is $|L_1|$.  Let $i\geq1$ and switch so that the tree is
positive.  Apply the same argument to either of the graphs
$F=G[L_i]$ and $F=G(L_i,L_{i+1})$.  If $e(F)>c\,v(F)$, one sign class
has more than $(k-1)v(F)$ edges and hence average degree greater than
$2k-2$.  The first ordinary graph fact, with the integer $s=2k-2>3$,
gives a chorded cycle of length at least $2k-1$ in that sign class.
\Cref{lem:monochromatic-BFS-closure} then gives a balanced $2k$-cycle,
a contradiction.  This proves both estimates.

\emph{Expansion of the levels.}  Let $x=n^{1/k}$ and suppose that a
$C_{+2k}$-free signed graph on $n$
vertices has more than $(cx+4c+c/x)n$ edges.  Repeatedly delete vertices
of current degree at most $cx+4c+c/x$.  The process cannot delete the
whole graph, since then it would remove at most $(cx+4c+c/x)n$ edges.
Take a connected component of the remaining graph, whose minimum degree
$\delta$ satisfies $\delta>cx+4c+c/x$, and choose a breadth-first-search
tree in it.  Put $a_i=|L_i|$, so $a_0=1$.

We prove that $a_i>xa_{i-1}$ for $1\leq i\leq k$.  For $i=1$, this
follows from $a_1\geq\delta>x$.  Suppose that $2\leq i\leq k$ and
$a_{i-2}<a_{i-1}/x$.  Edges join only equal or consecutive levels, so
summing degrees in $L_{i-1}$ counts each edge within that level twice and
each edge to an adjacent level once.  Using
\eqref{eq:signed-layer-bounds}, we obtain
\begin{align*}
e(G(L_{i-1},L_i))
&\geq\delta a_{i-1}-2e(G[L_{i-1}])-e(G(L_{i-2},L_{i-1}))\\
&\geq(\delta-3c)a_{i-1}-ca_{i-2}\\
&>\left(\delta-3c-\frac cx\right)a_{i-1}
>c(x+1)a_{i-1}.
\end{align*}
On the other hand, \eqref{eq:signed-layer-bounds} bounds this quantity by
$c(a_{i-1}+a_i)$.  Hence $a_i>xa_{i-1}$, completing the induction.  It
follows that $a_k>xa_{k-1}>\cdots>x^ka_0=n$, a contradiction.
\end{proof}

The factor two enters when the edges in each layer are divided into their
two sign classes.  Improving the leading coefficient by this method would
require a density argument using both signs together.

\section{Balanced quadrilaterals}\label{sec:C4}

The ordinary quadrilateral number satisfies
$\ex(n,C_4)=(1/2+o(1))n^{3/2}$
\cite{MR101250,ErdosRenyiSos66,FurediC4}.  The comparison
\eqref{eq:intro-comparison} leaves a factor of two between the signed
upper and lower bounds.  Counting paths of length two gives the correct
signed leading constant.

\begin{theorem}\label{thm:C4-upper}\label{thm:C4-asymptotic}
For every $n\geq1$,
\[
\hex(n,C_{+4})\leq\frac n4\left(1+\sqrt{8n-7}\right).
\]
Moreover, $\hex(n,C_{+4})=(\sqrt2/2+o(1))n^{3/2}$ as $n\to\infty$.
\end{theorem}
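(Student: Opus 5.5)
The upper bound comes from counting paths of length two with their signs. Let $\widehat G$ be a $C_{+4}$-free signed graph on $n$ vertices. Fix a pair $\{u,w\}$ and let $v,v'$ be two distinct common neighbours. The paths $uvw$ and $uv'w$ together form a $4$-cycle, whose sign is $\sigma(uv)\sigma(vw)\sigma(uv')\sigma(v'w)$. This cycle is balanced exactly when the two paths have the same sign. Hence every pair has at most one common neighbour joined by a positive $2$-path and at most one joined by a negative $2$-path, so at most two common neighbours in total. Double counting cherries gives
\[
\sum_{v}\binom{d(v)}2\leq 2\binom n2=n(n-1).
\]
By convexity, with $d=2e/n$ this becomes $d(d-1)\leq 2(n-1)$. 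Solving the quadratic gives $d\leq\bigl(1+\sqrt{8n-7}\bigr)/2$, which is exactly $e\leq\frac n4\bigl(1+\sqrt{8n-7}\bigr)$. Its leading term is $\frac{\sqrt8}{4}n^{3/2}=\frac{\sqrt2}{2}n^{3/2}$, so this settles the upper half of the asymptotic statement.

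For the lower bound, the counting above shows what an extremal example must look like: almost every pair should have exactly two common neighbours, reached by $2$-paths of opposite signs. Equivalently, via \eqref{eq:cover-transfer} with $\alpha=3/2$ and $C=1/2$, it suffices to find a $C_4$-free graph $D$ on $2n$ vertices with $(1/2+o(1))(2n)^{3/2}$ edges carrying an admissible involution in the sense of \Cref{prop:admissible-involution}. Then $\hex(n,C_{+4})\geq 2^{1/2}\cdot\frac12 n^{3/2}$.

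The Erd\H{o}s--R\'enyi polarity graph cannot be used directly, because it has an odd number $q^2+q+1$ of vertices. I would instead build an algebraic cover of it. Take $q\equiv1\pmod 4$ and let $T\le\FF_q^*$ be the subgroup of index $4$. Let $D$ have vertex set $(\FF_q^3\setminus 0)/T$, which has $4(q^2+q+1)$ elements, and join $x$ to $y$ when $x\cdot y=0$, possibly twisted by a character on $\FF_q^*/T$. The deck involution will be multiplication by an element $g$ of the index-$2$ subgroup $S\supset T$ with $g\notin T$. The quotient $\widehat G$ then lives on $(\FF_q^3\setminus 0)/S$, which has $n=2(q^2+q+1)$ vertices and degrees about $2q\approx\sqrt{2n}$, as required. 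Conditions (i) and (ii) of \Cref{prop:admissible-involution} should follow from checking how $g$ acts on the orthogonality relation, using that self-orthogonal vectors give only $O(n)$ defects, which can be deleted.

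The main obstacle is choosing the twist so that $D$ is genuinely $C_4$-free. Concretely, for each non-proportional pair $x,y$ the two common neighbours in $\widehat G$, namely $z$ and $hz$ with $h\in\FF_q^*\setminus S$ and $z$ spanning $\langle x,y\rangle^\perp$, must give $2$-paths of opposite sign. I would first try defining the sign of $xz$ by a quadratic-character-type function of a normalized representative of $x$ or $z$. I would then verify the required identity $\sigma(xz)\sigma(zy)\sigma(x\,hz)\sigma(hz\,y)=-1$ by a direct character computation.

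Finally, the result must hold for all large $n$, not just $n=2(q^2+q+1)$. For that, I would pass from the special values to general $n$ using the density of primes $q\equiv1\pmod4$ and deletion of vertices, which costs only $o(n^{3/2})$ edges.
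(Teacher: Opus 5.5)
Your upper bound is correct and is the paper's argument. The paper gets ``at most two common neighbours'' from the $\Theta_{2,3}=K_{2,3}$ obstruction, which is exactly your observation that two $2$-paths of equal sign close a balanced $4$-cycle; the cherry count and the quadratic are the same.

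The lower bound has a genuine gap. It is not only that the twist and the sign function are never specified: the proposed cover cannot be $C_4$-free for any twist. In your quotient $\widehat G$ on $(\FF_q^3\setminus0)/S$, adjacency (inherited from $D$) requires orthogonality. So for $h\in\FF_q^*\setminus S$, the two distinct vertices $[x]_S$ and $[hx]_S$ over the same projective point have all their neighbours among the same $2(q+1)$ classes $[z]_S$ with $z\in x^{\perp}\setminus\{0\}=(hx)^{\perp}\setminus\{0\}$. The target coefficient $\sqrt2/2$ with $n\approx2q^2$ forces average degree $(2-o(1))q$, as you noted. Hence such a pair has about $2q$ common neighbours.

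You checked only non-proportional pairs $x,y$. For this proportional pair, any three common neighbours include two whose $2$-paths have the same sign. That gives a balanced $4$-cycle in $\widehat G$, which lifts to a $C_4$ in $D$. Equivalently, the support contains $K_{2,2q+2}\supseteq\Theta_{2,3}$, which your own first paragraph forbids. So a scalar cover of the polarity graph fails whatever character you use.

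The missing idea is an involution that acts nontrivially on the underlying geometry, so that mates have (almost) disjoint neighbourhoods. The paper uses the affine model $D$ on $\bigl((\Z_p\setminus\{0\})\times\Z_p\bigr)\setminus\{(a,b):a^2+2b=0\}$, with $(a,b)\sim(c,d)$ iff $ac=b+d$.
\begin{itemize}
\item Two distinct vertices have at most one common neighbour, since $ar=b+s$ and $cr=d+s$ have at most one solution.
\item $D$ has $(p-1)^2$ vertices and $e(D)=(p-1)^2(p-2)/2$.
\item The involution $(a,b)\mapsto(-a,b)$ is a fixed-point-free automorphism. Vertices with $a=0$ would be fixed, and those with $a^2+2b=0$ are exactly the ones adjacent to their mates; both are removed.
\item Condition~(ii) of \Cref{prop:admissible-involution} follows from $C_4$-freeness.
\end{itemize}
The quotient has $(p-1)^2/2$ vertices and $(p-1)^2(p-2)/4=(\sqrt2/2-o(1))n^{3/2}$ edges, and the prime number theorem handles general $n$. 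Your reduction via \eqref{eq:cover-transfer} with $C=1/2$ and $\alpha=3/2$ was the right framework; only the choice of $D$ and its involution fails.
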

\begin{proof}
Let $\widehat G$ be $C_{+4}$-free. By the theta obstruction in
\Cref{sec:structural-tools}, its underlying graph contains no
$\Theta_{2,3}=K_{2,3}$, so any two distinct vertices have at most two
common neighbours. Counting paths of length two by their middle vertices
and applying convexity, we obtain
\[
\frac{e(G)}n(2e(G)-n)
=n\binom{2e(G)/n}{2}
\leq\sum_{v\in V(G)}\binom{d(v)}2
\leq2\binom n2.
\]
Solving the resulting quadratic inequality gives the stated upper bound.

For the lower bound, let $p$ be an odd prime and define an ordinary graph
$D$ on
\[
V(D)=\bigl((\Z_p\setminus\{0\})\times\Z_p\bigr)
\setminus\{(a,b):a^2+2b=0\},
\]
where distinct vertices $(a,b)$ and $(c,d)$ are adjacent when $ac=b+d$.
Then $v(D)=(p-1)^2$.  To count edges, fix nonzero $a,c$.  The adjacency
equation determines $d=ac-b$, and the vertex conditions exclude
$b=-a^2/2$ and $b=ac+c^2/2$.  These values coincide exactly when $c=-a$.
There are therefore $(p-1)^2(p-2)+(p-1)$ ordered solutions before loops
are excluded.  The $p-1$ loops have $c=a$ and $b=d=a^2/2$, giving
\[
e(D)=\frac{(p-1)^2(p-2)}2.
\]
Two distinct vertices have at most one common neighbour.  Indeed, a common
neighbour $(r,s)$ of $(a,b)$ and $(c,d)$ satisfies $ar=b+s$ and $cr=d+s$.
If $a\ne c$, these equations determine $r,s$; if $a=c$ and $b\ne d$,
they have no solution.  Thus $D$ is $C_4$-free.

The involution $(a,b)\mapsto(-a,b)$ is fixed-point-free and preserves
adjacency.  Mates cannot be adjacent, since that would require
$a^2+2b=0$.  Nor can two mate-orbits span four edges, since these would
form a $C_4$.  By \Cref{prop:admissible-involution}, $D$ is the positive
double cover of a simple signed graph $\widehat F$ with
$n_p=(p-1)^2/2$ vertices and $(p-1)^2(p-2)/4$ edges.  Since a balanced
$4$-cycle in $\widehat F$ would lift to a $C_4$ in $D$, we have
\[
\hex(n_p,C_{+4})\geq\frac{(p-1)^2(p-2)}4
=\left(\frac{\sqrt2}{2}-o(1)\right)n_p^{3/2}.
\]
For every sufficiently large $n$, the prime number theorem \cite{Apostol1976}
allows us to choose an odd prime $p\leq\sqrt{2n}$ with
$p=(1-o(1))\sqrt{2n}$.  Adding isolated vertices gives the all-$n$ lower
bound and completes the proof.
\end{proof}

\section{The signed hexagon problem}\label{sec:hexagons}

Recall that $R_6(n)=\hex(n,\{C_{-3},C_{+6}\})$.  The exclusion of
$C_{-3}$ makes the double-cover correspondence exact.

\begin{proposition}\label{cor:exact-C6}
For every simple signed graph $\widehat G$, the double cover
$\DC(\widehat G)$ is $C_6$-free if and only if $\widehat G$ is
$\{C_{-3},C_{+6}\}$-free.
\end{proposition}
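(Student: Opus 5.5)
We derive both directions from \Cref{thm:cycle-lift} with $m=6$: $\DC(\widehat G)$ contains $C_6$ if and only if $\widehat G$ has a lift-simple closed walk of length $6$. So it suffices to show that $\widehat G$ has a lift-simple closed walk of length $6$ if and only if it contains a $C_{-3}$ or a $C_{+6}$.

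For the easy direction we exhibit the walks. A balanced $6$-cycle is itself a closed walk with no repeated vertex before the end. It is balanced, so it is lift-simple. A negative triangle traversed twice is a closed walk of length $6$ whose total sign is $(-1)^2=+1$. Its only internal returns $v_i=v_j$ with $0\le i<j<6$ have $j-i=3$, and each such subwalk is one traversal of the triangle, hence negative. So this walk is lift-simple as well. In either case the cover contains $C_6$, matching the remark before \Cref{thm:cycle-lift} that a negative $m$-cycle lifts to a $2m$-cycle.

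For the converse, let $W=v_0\cdots v_6$ be a lift-simple closed walk of length $6$. We classify $W$ by its pattern of repeated vertices. Since the graph is simple, $v_i\ne v_{i+1}$. A return $v_i=v_{i+2}$ would give a subwalk $e\,e$ of sign $+1$, which is forbidden. So every return $v_i=v_j$ (indices mod $6$) has cyclic distance $3$. Indeed, a cyclic distance of $2$ is excluded as above, either directly or after cyclically rotating $W$; rotation preserves lift-simplicity because $W$ is balanced, so the complementary arc of a return has the same sign as the arc itself.

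If $W$ has no repetition, it is a simple $6$-cycle, and it is balanced, so $\widehat G$ contains $C_{+6}$.

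Otherwise some $v_i=v_{i+3}$, and the subwalk $v_i v_{i+1} v_{i+2} v_{i+3}$ is a closed walk of length $3$ with negative sign. Its three vertices must be distinct: otherwise two of them would coincide at distance $1$ or $2$, which is already excluded. So this subwalk is a triangle, and it is negative, giving $C_{-3}$.

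The main obstacle is making the rotation step rigorous. The definition of lift-simple only constrains subwalks with $0\le i<j<m$, so we must justify that cyclic shifts preserve it. The argument is that the complementary closed subwalk has sign equal to the total sign of $W$ times the sign of $W[i,j]$, and the total sign is $+1$. Alternatively, we can avoid rotation by checking the few index configurations directly.
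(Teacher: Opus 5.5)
Your proposal is correct and follows the paper's proof essentially step for step. Both arguments reduce to \Cref{thm:cycle-lift} with $m=6$. Both rule out returns at cyclic distance $1$ (no loops) and $2$ (an edge traversed back and forth is positive), using that the complementary arc of a balanced walk has the same sign. Both then conclude that a remaining return bounds a negative triangle, and your explicit lift-simplicity check for the doubly traversed negative triangle just spells out the paper's remark that a negative triangle lifts to a $6$-cycle.
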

\begin{proof}
A negative triangle lifts to a $6$-cycle, and a balanced $6$-cycle lifts
to two $6$-cycles.  Conversely, project a $6$-cycle in the cover to a
lift-simple walk $W$.  If its six cyclically listed vertices are distinct,
then $W$ is a balanced $6$-cycle.  Otherwise, two occurrences of one vertex
bound a shorter cyclic arc of length at most $3$.  This closed subwalk is
negative by lift-simplicity (and the complementary arc has the same sign).
It cannot have length $1$, since the graph has no loops, or length $2$,
since traversing an edge twice is positive.  It is therefore a negative
triangle.
\end{proof}

We first give two descriptions of $R_6$ using this correspondence and the
parity criterion.  We then give two constructions: the first works for all
large orders, while the second gives a larger coefficient along an infinite
sequence.

For a graph $G$, let $Y_6(G)$ be the two-dimensional CW complex obtained
by attaching a two-cell along every triangle and every $6$-cycle of $G$.  Let
$\omega_G\in C^2(Y_6(G);\FF_2)$ take value $0$ on triangle-cells and
$1$ on hexagon-cells.  Also, for even $N$, let $\exadm(N,C_6)$ be the
maximum number of edges in an $N$-vertex $C_6$-free graph with an admissible
involution.

\begin{proposition}\label{thm:R6-cohomological-criterion}
\label{thm:admissible-cover-reformulation}
A graph $G$ supports a $\{C_{-3},C_{+6}\}$-free signing if and only
if $[\omega_G]=0$ in $H^2(Y_6(G);\FF_2)$.  Equivalently, whenever a
family $\mathcal T$ of triangles and a family $\mathcal H$ of hexagons satisfy
\[
\mathop{\triangle}_{T\in\mathcal T}E(T)
\mathbin\triangle
\mathop{\triangle}_{C\in\mathcal H}E(C)=\varnothing,
\]
the number $|\mathcal H|$ is even.  Consequently, for every positive integer $n$,
\begin{equation}\label{eq:R6-exact}
R_6(n)=\max\{e(G):v(G)=n,\ [\omega_G]=0
\text{ in }H^2(Y_6(G);\FF_2)\}
=\frac12\exadm(2n,C_6).
\end{equation}
\end{proposition}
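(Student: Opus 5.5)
The plan is to run the mixed-parity version of the cohomological argument of \Cref{thm:cohomological-signing}, and then transfer the result through \Cref{cor:exact-C6} and \Cref{prop:admissible-involution}.

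\emph{The signing criterion.} Being $\{C_{-3},C_{+6}\}$-free means every triangle is positive and every hexagon is negative. In the binary encoding, this says $\sum_{e\in E(T)}x(e)=0$ for each triangle $T$ and $\sum_{e\in E(C)}x(e)=1$ for each $6$-cycle $C$. In the complex $Y_6(G)$ this is exactly the equation $\delta^1x=\omega_G$. So a suitable signing exists if and only if $\omega_G\in\operatorname{im}\delta^1$, that is, $[\omega_G]=0$; there are no three-cells, so $\omega_G$ is automatically a cocycle.

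Using $\operatorname{im}\delta^1=(\ker\partial_2)^\perp$, as in the proof of \Cref{thm:cohomological-signing}, this holds if and only if $\omega_G$ vanishes on every cellular two-cycle. A two-cycle is a family $\mathcal T\cup\mathcal H$ of triangle-cells and hexagon-cells whose edge sets have empty symmetric difference. The value of $\omega_G$ on it is $|\mathcal H|\bmod 2$. This gives the stated parity formulation.

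\emph{First equality in \eqref{eq:R6-exact}.} This follows immediately: $R_6(n)$ is the maximum of $e(G)$ over $n$-vertex graphs $G$ admitting such a signing, and we have just characterized those graphs as the ones with $[\omega_G]=0$.

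\emph{Second equality: $R_6(n)\le\frac12\exadm(2n,C_6)$.} Take an extremal $\{C_{-3},C_{+6}\}$-free signed graph $\widehat G$ on $n$ vertices. By \Cref{cor:exact-C6}, $\DC(\widehat G)$ is $C_6$-free. It has $2n$ vertices and $2e(G)$ edges, and its deck involution is admissible by the easy direction of \Cref{prop:admissible-involution}.

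\emph{Second equality: $\frac12\exadm(2n,C_6)\le R_6(n)$.} Take a $C_6$-free graph $D$ on $2n$ vertices with an admissible involution $\tau$. \Cref{prop:admissible-involution} gives a simple signed graph $\widehat G$ on $n$ vertices with $D\cong\DC(\widehat G)$, so $e(G)=e(D)/2$. By \Cref{cor:exact-C6}, $\widehat G$ is $\{C_{-3},C_{+6}\}$-free.

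\emph{Main obstacle.} The only delicate point is the vertex count in the reverse direction. A fixed-point-free involution on $2n$ vertices has exactly $n$ orbits, so the quotient has exactly $n$ vertices, possibly some of them isolated. This makes the equality exact for every $n$, rather than holding only up to adding isolated vertices. Everything else is a direct invocation of the earlier results.
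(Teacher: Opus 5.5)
Your proposal is correct and matches the paper's proof step for step. You read the signing condition as $\delta^1x=\omega_G$, use $\operatorname{im}\delta^1=(\ker\partial_2)^\perp$ to get the parity criterion and the first equality, and then transfer through \Cref{cor:exact-C6} and \Cref{prop:admissible-involution} in both directions for the second equality. Your remark that a fixed-point-free involution on $2n$ vertices yields exactly $n$ quotient vertices is simply the paper's ``the quotient halves both counts'' made explicit.
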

\begin{proof}
Using the binary encoding $x$ of a signing, the condition on a triangle-cell
is $(\delta^1x)(f)=0$, while the condition on a hexagon-cell is
$(\delta^1x)(f)=1$.  Thus the desired signing is exactly a solution of
$\delta^1x=\omega_G$.  Apply the prescribed-parity argument following
\Cref{thm:cohomological-signing}.  A cellular $2$-cycle is precisely a
choice of triangles and hexagons in which each edge occurs an even number
of times.  The value of $\omega_G$ on that $2$-cycle is the parity of
the number of its hexagons.  This gives both forms of the criterion, and
maximizing over the graphs that support such a signing gives the first
equality in \eqref{eq:R6-exact}.

For the second equality, \Cref{cor:exact-C6} sends every signed graph
counted by $R_6(n)$ to a $C_6$-free positive double cover with $2n$
vertices, $2e(G)$ edges, and an admissible deck involution.  Conversely,
\Cref{prop:admissible-involution} represents every graph counted by
$\exadm(2n,C_6)$ as such a cover; its signed quotient is
$\{C_{-3},C_{+6}\}$-free by \Cref{cor:exact-C6}.  The quotient halves
both counts, proving the equality.
\end{proof}

The parity criterion gives a small additional obstruction in this case.
Label the parts of $K_{3,3}$ by $\{x_i:i\in\Z_3\}$ and
$\{y_i:i\in\Z_3\}$, and put $M_r=\{x_iy_{i+r}:i\in\Z_3\}$.
The three hexagons $K_{3,3}-M_r$, $r\in\Z_3$, cover each edge twice,
so they cannot all be unbalanced.  Thus every $C_{+6}$-free signed graph
has $K_{3,3}$-free underlying graph.  Since $K_{3,3}$ is
$\Theta_{3,3}$-free, this is a restriction beyond the theta obstruction.
Combining these obstructions with \eqref{eq:R6-exact}, we obtain
\begin{equation}\label{thm:corrected-R6}
R_6(n)\leq\min\{\tfrac12\ex(2n,C_6),
\ex(n,\{\Theta_{3,3},K_{3,3}\})\}.
\end{equation}
In particular, the bound $\ex(N,C_6)\leq\lambda N^{4/3}+O(N)$ of
F\"uredi, Naor, and Verstra\"ete \cite{MR2227729} implies
$R_6(n)\leq2^{1/3}\lambda n^{4/3}+O(n)$, where
$\lambda=0.627110780\ldots$ is the real root of
$16\lambda^3-4\lambda^2+\lambda-3=0$.

\subsection{A construction for all orders}

\begin{theorem}\label{thm:explicit-C6}\label{cor:all-n-C6}
For every odd prime $q$, there is a $\{C_{-3},C_{+6}\}$-free signed
graph $\widehat Q_3(q)$ with
\[
v(\widehat Q_3(q))=\frac{3q^3-q-2}{2},\qquad
e(\widehat Q_3(q))=q^4-q^2.
\]
Consequently, $R_6(n)\geq((2/3)^{4/3}-o(1))n^{4/3}$ as $n\to\infty$.
\end{theorem}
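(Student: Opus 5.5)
The plan is to obtain $\widehat Q_3(q)$ directly from \Cref{thm:equivariant-cloning} with $k=3$ and $r=2$. The target counts fit this: we need $e(B)=q^4-q^2$ and $2|X|+|Y|=3q^3-q-2$. The natural base is a bipartite graph of girth $8>6$ of Wenger or biaffine type. Concretely, take $X$ (``points'') and $Y$ (``lines'') to be copies of $\FF_q^3$, and join $(p_1,p_2,p_3)$ to $[l_1,l_2,l_3]$ when
\[
l_2=p_1l_1+p_2,\qquad l_3=p_1l_2+p_3 ,
\]
or the equivalent standard form. This graph is $q$-regular, has $q^3$ vertices on each side, and has girth $8$. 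Since the quotient supplied by \Cref{thm:equivariant-cloning} has bipartite underlying graph, it has no triangles at all, and hence is automatically $C_{-3}$-free. The theorem already gives $C_{+6}$-freeness. Alternatively, $\DC(\widehat Q)\cong B[2]$ is $C_6$-free, and \Cref{cor:exact-C6} gives both properties at once.

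The steps are as follows. First, I will fix involutions $\alpha_X,\alpha_Y$ given by sign changes on suitable coordinates, for instance $(p_1,p_2,p_3)\mapsto(-p_1,\pm p_2,\pm p_3)$ with a compatible map on lines. The signs are chosen so that the adjacency equations are preserved, which is condition (i). Second, I will delete the vertices where the involution fails to be fixed-point-free (those with the negated coordinates zero). I will also delete the vertices violating condition (ii), namely points adjacent to both $y$ and $\alpha_Y(y)$. Third, I will count the surviving vertices and edges exactly. Deleting vertices only increases girth, so the girth hypothesis persists. The counts should come out as $|Y|$ and $|X|$ with $2|X|+|Y|=3q^3-q-2$ and $e(B)=q^4-q^2$. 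I will tune the choice of which coordinates are negated, and which degenerate vertex classes are removed, so that these identities hold. The loss of $q^2$ edges suggests removing about $q$ vertices of degree $q$ on one side, plus a few lower-degree adjustments. Fourth, I will apply \Cref{thm:equivariant-cloning} to obtain $v=(2|X|+|Y|)/2$ and $e=2e(B)/2=q^4-q^2$.

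For the asymptotic statement, $n_q=(3q^3-q-2)/2$ gives $q=(1+o(1))(2n_q/3)^{1/3}$ and $e=(1-o(1))q^4=((2/3)^{4/3}-o(1))n_q^{4/3}$. For general large $n$, I will choose, by the prime number theorem, the largest odd prime $q$ with $n_q\le n$. Then $q=(1-o(1))(2n/3)^{1/3}$, and adding isolated vertices preserves freeness.

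The main obstacle is the second and third steps: choosing involutions and degenerate vertex sets so that condition (ii) genuinely holds everywhere, and the deletions give exactly the stated counts. A point adjacent to both $y$ and $\alpha_Y(y)$ corresponds to solving the adjacency equations simultaneously for a line and its image. I would first solve this system explicitly to identify the bad points, which I expect to lie on a small algebraic set such as $p_1=0$. Then I would check whether removing them, together with fixed points, produces the $-q^2$ edge defect and the $-q-2$ vertex defect. If it does not, I would try a different coordinate sign pattern for $\alpha_X,\alpha_Y$.
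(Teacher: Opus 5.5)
Your route is the paper's: a girth-$8$ Wenger-type base, \Cref{thm:equivariant-cloning} with $k=3$, $r=2$, and prime interpolation. In fact your graph is isomorphic to the paper's $H_3(q)$ via $x=(l_3,-l_2,l_1)$, $y=(p_3,-p_2,p_1)$, $t=p_1$.

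The gap is that the step carrying the exact counts is left open. That step is the choice of involutions and of which side to clone, and the instance you suggest cannot give the counts. Suppose $\alpha_P(p)=(-p_1,\pm p_2,\pm p_3)$ and lines are changed by coordinate signs. Then the equations $l_2=p_1l_1+p_2$, $l_3=p_1l_2+p_3$ force one of two pairs:
\begin{enumerate}[(a)]
\item $\alpha_P=(-p_1,-p_2,p_3)$, $\alpha_L=(l_1,-l_2,l_3)$, with $q$ fixed points, $q^2$ fixed lines and $q^4-q^3$ surviving edges;
\item $\alpha_P=(-p_1,p_2,-p_3)$, $\alpha_L=(-l_1,l_2,-l_3)$, with $q$ fixed vertices on each side and $q^4-2q^2+q$ surviving edges.
\end{enumerate}
Both give the asymptotic corollary, but neither reaches $q^4-q^2$ edges.

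What is needed is one side with a single fixed vertex whose neighbourhood is exactly the fixed set of the other side. In your coordinates this means keeping $p_1$: take $\alpha_P(p)=(p_1,-p_2,-p_3)$ and $\alpha_L(l)=-l$. These preserve adjacency. The only fixed line is $0$, and its neighbours are exactly the $q$ fixed points $(a,0,0)$. Deleting the fixed vertices therefore removes exactly $q^2$ edges and leaves $q^3-1$ lines, $q^3-q$ points and $q^4-q^2$ edges.

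You must then clone the lines, not the points, to get $(2(q^3-1)+q^3-q)/2=(3q^3-q-2)/2$ vertices. This is the paper's construction transported to your coordinates; the paper uses $x_0\mapsto1-x_0$ where a pure sign change also works. Cloning the points instead gives $(3q^3-2q-1)/2$ vertices, which you could pad with $(q-1)/2$ isolated vertices.

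The obstacle you single out, condition (ii), needs no extra deletions, and deleting further vertices would only spoil the counts. Suppose $x$ were adjacent to $y$ and $\alpha_Y(y)$. Then (i) makes $\alpha_X(x)$ adjacent to both as well. Once fixed points are removed, $x\,y\,\alpha_X(x)\,\alpha_Y(y)$ would be a $4$-cycle, contradicting girth $8$. The paper argues instead that $y$ and $\alpha_Y(y)$ share the coordinate $t$, which together with $x$ determines the neighbour. The rest of your plan is fine, including $C_{-3}$-freeness via bipartiteness or \Cref{cor:exact-C6} and the interpolation.
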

\begin{proof}
Let $H_3(q)$ be the Wenger graph with parts $X=Y=\FF_q^3$, where
$x=(x_0,x_1,x_2)$ and $y=(y_0,y_1,t)$ are adjacent when
\begin{equation}\label{eq:Wenger-three}
y_0=x_0+x_1t,\qquad y_1=x_1+x_2t.
\end{equation}
For each fixed $x$ and $t$, these equations determine a unique neighbour
$y$.  Conversely, fixing $y$ and $x_2$ determines $x_1$ and then $x_0$.
Thus $H_3(q)$ is $q$-regular and has $2q^3$ vertices and $q^4$ edges.
It has no $C_4$: if distinct $x,x'$ have two common neighbours with last
coordinates $t,t'$, then $t\neq t'$ by uniqueness, and subtracting the
two pairs of equations gives
\[
(x_1-x'_1)(t-t')=0,\qquad (x_2-x'_2)(t-t')=0.
\]
It follows that $x_1=x'_1$, $x_2=x'_2$, and then $x_0=x'_0$, a
contradiction.  Wenger's theorem excludes $C_6$ \cite{MR1109426}, so the
girth is at least $8$.

The maps
\[
\alpha_X(x_0,x_1,x_2)=(1-x_0,-x_1,-x_2),\qquad
\alpha_Y(y_0,y_1,t)=(1-y_0,-y_1,t)
\]
preserve adjacency.  Their fixed points are
$X_0=\{(1/2,0,0)\}$ and $Y_0=\{(1/2,0,t):t\in\FF_q\}$.
Delete $X_0\cup Y_0$ to obtain $B_3(q)$.  The vertex in $X_0$ has
neighbourhood exactly $Y_0$.  Since all vertices had degree $q$, the
number of deleted edges is $q+q\cdot q-q=q^2$.  Hence
\begin{equation}\label{eq:B3-parameters}
|X(B_3)|=q^3-1,\qquad |Y(B_3)|=q^3-q,\qquad e(B_3)=q^4-q^2.
\end{equation}
The remaining involutions are fixed-point-free.  Also, no $x$ is adjacent
to both $y$ and $\alpha_Y(y)$: these two vertices have the same last
coordinate $t$, whereas \eqref{eq:Wenger-three} gives a unique neighbour
of $x$ for each $t$.

Apply \Cref{thm:equivariant-cloning} with $k=3$ and $r=2$.
The resulting signed graph satisfies
$\DC(\widehat Q_3(q))\cong B_3(q)[2]$ and has the stated parameters.
Its double cover is $C_6$-free, so \Cref{cor:exact-C6} gives the required
cycle exclusions.

To pass to all orders, put $n_q=(3q^3-q-2)/2$.  The construction gives
\[
R_6(n_q)\geq q^4-q^2=
\left(\left(\frac23\right)^{4/3}-o(1)\right)n_q^{4/3}.
\]
For every sufficiently large $n$, choose an odd prime
$q\leq(2n/3)^{1/3}$ with $q=(1-o(1))(2n/3)^{1/3}$, using the prime
number theorem \cite{Apostol1976}, and add isolated vertices.  This proves
the all-$n$ bound.
\end{proof}

\subsection{An equivariant F\"uredi--Naor--Verstra\"ete construction}

We adapt the ordinary construction of F\"uredi, Naor, and Verstra\"ete
\cite{MR2227729} to obtain a larger lower coefficient for $R_6$ along
an infinite sequence.  We choose its vertices and orientations
equivariantly so that the resulting graph has an admissible involution.

\begin{theorem}\label{thm:equivariant-FNV}
For every fixed $\alpha\in(0,1)$, there is a sequence of
$\{C_{-3},C_{+6}\}$-free signed graphs $\widehat Q_q^*$ with
\[
v(\widehat Q_q^*)=\left(\frac{1+\alpha}{2}+o(1)\right)N,\qquad
e(\widehat Q_q^*)=
\left(\frac{1+2\alpha-\alpha^2}{4}+o(1)\right)N^{4/3},
\]
where $N=q^3$ and $q=2^{2e+1}\to\infty$.  Consequently,
\[
\limsup_{n\to\infty}\frac{R_6(n)}{n^{4/3}}
\geq2^{1/3}c_6=0.672633550\ldots.
\]
Here $c_6=3(\sqrt5-2)/(\sqrt5-1)^{4/3}=0.533869602\ldots$.
\end{theorem}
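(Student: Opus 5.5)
The plan is to build, for each $q=2^{2e+1}$, an ordinary $C_6$-free graph $D_q$ on $2v$ vertices with an admissible involution $\tau$. \Cref{prop:admissible-involution} then turns $D_q$ into a simple signed quotient $\widehat Q_q^*$ with $v$ vertices and $e(D_q)/2$ edges. By \Cref{cor:exact-C6} this quotient is $\{C_{-3},C_{+6}\}$-free, and \eqref{eq:R6-exact} together with the transfer \eqref{eq:cover-transfer} gives the lower bound. So $D_q$ should be the F\"uredi--Naor--Verstra\"ete graph with parameter $\alpha$, on about $(1+\alpha)N$ vertices with about $\frac{1+2\alpha-\alpha^2}{2}N^{4/3}$ edges. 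It must be built so that a fixed-point-free automorphism of order two survives.

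\textbf{Base geometry.} I would start from the symplectic generalized quadrangle $W(q)$, $q=2^{2e+1}$. Its incidence graph is bipartite on about $2q^3$ vertices, is $(q+1)$-regular and has girth $8$. Because $q$ is an odd power of $2$, $W(q)$ has a polarity $\pi$, and this is exactly the structure the FNV construction uses. As I recall it (to be checked against \cite{MR2227729}), FNV keep all points and all lines on the two sides. They use $\pi$ to identify a portion of the lines of relative size about $\alpha$ with points, and add the polarity-type edges there. Only an $\alpha^2$-fraction of potential extra edges is double counted, which produces the factor $1+2\alpha-\alpha^2$. The $\alpha$-portion is selected by coordinate conditions, with orientations chosen to avoid the short cycles the identification would create.

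\textbf{Equivariance.} The key new step is to pick a collineation $g$ of $W(q)$ that commutes with $\pi$ and acts fixed-point-freely on points and lines, outside a negligible set of $O(q^2)$ elements. A natural choice is a translation $x_0\mapsto x_0+c$ in affine coordinates like those used for $H_3(q)$ in \Cref{thm:explicit-C6}; in characteristic $2$ such a translation is automatically an involution. Fixed elements would be deleted, costing $o(N^{4/3})$ edges, as was done for $B_3(q)$. I would then choose the FNV $\alpha$-portion and its orientations to be $g$-invariant, for instance by using coordinate conditions that are invariant under the translation. With that choice, $\tau=g$ is an automorphism of $D_q$.

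\textbf{Checking admissibility.} Condition (ii) of \Cref{prop:admissible-involution} is automatic, since four edges between two orbits would form a $C_4$ in a $C_6$-free graph of girth at least $5$; the same holds if only the triangle-free property is available. Condition (i) requires that no vertex be adjacent to its image. For points and lines this means $g$ moves no point to a point collinear with it along an incidence edge. For the polarity edges it means $v$ is never joined to $\pi(g(v))$. I expect this to be the main obstacle: the translation must avoid all such incidences except on a set we can discard. If some remain, I would delete an $O(q^2)$-size bad set, as in the $B_3(q)$ deletion.

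\textbf{Counting and optimisation.} Counting gives $v(\widehat Q_q^*)=\bigl(\tfrac{1+\alpha}{2}+o(1)\bigr)N$ and $e(\widehat Q_q^*)=\bigl(\tfrac{1+2\alpha-\alpha^2}{4}+o(1)\bigr)N^{4/3}$. Maximising
\[
\frac{(1+2\alpha-\alpha^2)/4}{((1+\alpha)/2)^{4/3}}
\]
over $\alpha\in(0,1)$ should give the optimum at $\alpha=(\sqrt5-1)/2$ by the FNV calculation, with value $2^{1/3}c_6$. This yields the stated $\limsup$.
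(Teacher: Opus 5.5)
Your outline matches the paper's: build an ordinary $C_6$-free graph with an admissible involution, then pass to the signed quotient via \Cref{prop:admissible-involution} and \Cref{cor:exact-C6}. However, the proposal never actually produces that graph, and the steps you do sketch fail in three places.

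First, the FNV graph is obtained by cloning vertices of the loopless polarity graph $P_q$ on $\FF_q^3$, which has no $C_3$, $C_4$ or $C_6$. It is not a partial identification of lines with points in the incidence graph of $W(q)$. One takes a set $A$ and adds a clone $w_a$ of each $a\in A$. The clone is joined to the neighbours of $a$ outside $A$ and to the out-neighbours of $a$ under an orientation of $G[A]$. Hence $D_q$ contains many $4$-cycles $a\,u\,w_a\,u'$, where $u,u'$ are two neighbours of $a$ outside $A$. So your derivation of condition (ii) from ``girth at least $5$'' is invalid, and triangle-freeness says nothing about four edges between two orbits. Condition (ii) has to be checked through the base graph:
\begin{itemize}
\item pairs of base orbits are handled by $C_4$-freeness of $G_q$;
\item clone orbits span no edges;
\item for a mixed pair $\{w_a,w_{T_s(a)}\}$, $\{u,T_s(u)\}$, the edges $w_au$ and $w_aT_s(u)$ force $au,aT_s(u)\in E(G_q)$. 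By equivariance this gives the $4$-cycle $a\,u\,T_s(a)\,T_s(u)$ in $G_q$.
\end{itemize}
Similarly, an equivariant orientation of $G[A]$ exists only if no edge is swapped by the involution. This is why the paper deletes all mate edges $x\,T_s(x)$ before orienting. You also owe the argument that the cloned graph is $C_6$-free (projection to a tree walk with at most three edges, then the orientation contradiction); you only attribute it to FNV.

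Second, the involution is the crux, and you leave it as a guess. In the coordinates of \eqref{eq:polarity}, shifting a single coordinate changes the left-hand sides by nonzero constants, so it is not an automorphism. Commuting with the polarity forces the linked shift $T_s(x)=(x_1,x_2+s^\theta,x_3+s)$. This works only because $(s^\theta)^{2\theta}=s^q=s$, that is, $2\theta^2=q$.

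You must also justify that an invariant $A$ meets a $(2\alpha-\alpha^2+o(1))$-fraction of the edges. The paper selects $T_s$-orbits independently with probability $\alpha$ and applies Chebyshev. A condition $x_1\in S$ would also work here, since $T_s$ fixes $x_1$ and each vertex has at most one neighbour with each prescribed first coordinate. That too needs proof.

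Finally, your optimization is wrong. The critical-point equation is $\alpha^2+4\alpha-1=0$, giving $\alpha=\sqrt5-2$ and $1+2\alpha-\alpha^2=6\alpha$. At your $\alpha=(\sqrt5-1)/2$ the ratio is about $0.615$, below even the value $2^{-2/3}$ at $\alpha=0$, so it does not yield $2^{1/3}c_6$.
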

\begin{proof}
\emph{The polarity graph and its involution.}
Let $q=2^{2e+1}$ with $e\geq1$, put $\theta=2^e$, and let $P_q$ be
the graph on $\FF_q^3$ in which distinct vertices $x$ and $y$ are adjacent
if and only if
\begin{equation}\label{eq:polarity}
x_2+y_3^\theta=x_1y_1^{2\theta},\qquad
x_3+y_2^{2\theta}=x_1^2y_1^{2\theta}.
\end{equation}
This is the loopless polarity graph in \cite[Section~3]{LUW1999}, with
their coordinates $(a,b,c)$ replaced by $(a,b,c+ab)$.  It has no
$C_3$, $C_4$, or $C_6$, and its parameters are
\begin{equation}\label{eq:polarity-parameters}
N=q^3,\qquad e(P_q)=\frac{q^4-q^2}{2},\qquad\Delta(P_q)\leq q.
\end{equation}

Fix $s\in\FF_q^*$ and define
\[
T_s(x_1,x_2,x_3)=(x_1,x_2+s^\theta,x_3+s).
\]
Since the field has
characteristic $2$ and $s\neq0$, the map is a fixed-point-free
involution.  It also preserves adjacency: the right-hand sides of
\eqref{eq:polarity} are unchanged, and the changes in the left-hand
sides are $s^\theta+s^\theta=0$ and
$s+(s^\theta)^{2\theta}=s+s^q=0$, using $2\theta^2=q$.
Delete all edges $xT_s(x)$ and call the resulting graph $G_q$.  At
most $N/2$ edges are deleted, so
\begin{equation}\label{eq:Gq-edges}
e(G_q)=\left(\frac12+o(1)\right)N^{4/3}.
\end{equation}
The deleted set is $T_s$-invariant, and hence $T_s$ remains an
automorphism.  Every edge of $G_q$ now joins distinct $T_s$-orbits;
in particular, no edge is fixed setwise by $T_s$.

\emph{An invariant set and its expansion.}
We need to choose the set of vertices to be cloned without destroying
this symmetry.  Select each of the $N/2$ orbits independently with
probability $\alpha$, and let $A$ be their union.  Each edge has endpoints
in two distinct orbits, so the probability that it has an endpoint in $A$
is $1-(1-\alpha)^2=2\alpha-\alpha^2$.  Write $Z_A$ for the number
of such edges.  Then
\[
\mathbb E|A|=\alpha N,\qquad
\mathbb E Z_A=(2\alpha-\alpha^2)e(G_q).
\]

To control its variance, let $d(O)$ be the number of edges incident with
an orbit $O$.  Edge indicators are independent unless their edges meet a
common orbit, and $d(O)\leq2q$.  The number of ordered pairs of dependent
indicators is therefore at most
\[
\sum_Od(O)^2\leq\frac N2(2q)^2=O(Nq^2).
\]
Thus $\operatorname{Var}(Z_A)=O(N^{5/3})=o(e(G_q)^2)$.  Also,
$|A|$ is twice a sum of independent orbit indicators, so
$\operatorname{Var}(|A|)=O(N)$.  By Chebyshev's inequality, with
probability tending to one both quantities differ from their expectations
by lower-order terms.  We may therefore fix a $T_s$-invariant set $A$ with
\begin{equation}\label{eq:invariant-selection}
|A|=(\alpha+o(1))N,\qquad
Z_A=(2\alpha-\alpha^2+o(1))e(G_q).
\end{equation}

Put $B=V(G_q)\setminus A$ and take a disjoint copy
$W=\{w_a:a\in A\}$ of $A$.  Every edge orbit of $G_q[A]$ has size
two, because no edge is fixed setwise.  Choose an orientation on one edge
of each orbit and transport it by $T_s$.  This gives a
$T_s$-equivariant orientation of $G_q[A]$.

We define $G_q^*$ on $V(G_q)\sqcup W$ by retaining every edge of
$G_q$ and adding the following edges:
\begin{align}
w_au&\quad\text{if $a\in A$, $u\in B$, and $au\in E(G_q)$},
\label{eq:expansion-AB}\\
w_au&\quad\text{if $a\to u$ is an oriented edge of $G_q[A]$}.
\label{eq:expansion-AA}
\end{align}
No edges are added inside $W$.  The first rule adds one edge for each
edge between $A$ and $B$, and the second adds one for each edge of
$G_q[A]$.  Thus exactly one edge is added for each edge counted by $Z_A$.

\emph{Cycle exclusion and admissibility.}
We claim that $G_q^*$ is $C_6$-free.  Suppose otherwise, and project a
$6$-cycle $C$ to a closed walk $\omega$ in $G_q$ by sending $w_a$ to
$a$ and fixing the vertices of $V(G_q)$.  Let $T$ be its simple support.
A cycle in $T$ has length at most $6$.  Since $G_q$ is
$\{C_3,C_4,C_6\}$-free, the only possible length is $5$.  But if all
five edges of a $5$-cycle occur in $\omega$, its sixth edge traversal
leaves two vertices of odd degree in the traversal multigraph.  This is
impossible for a closed walk.  Therefore $T$ is a tree.  Each of its
edges is a cut-edge, and a closed walk must cross it equally often in
both directions.  Hence every support edge is traversed at least twice,
and
\begin{equation}\label{eq:FNV-tree-size}
|E(T)|\leq3.
\end{equation}

The cycle $C$ must contain at least two vertices of $W$.  With none it
would be a $6$-cycle in $G_q$.  With exactly one, its other five vertices
would form a simple four-edge path in $T$, again impossible.  Take two
vertices of $C$ in $W$, say $w_a$ and $w_b$.  Since $W$ is independent,
the two neighbours of each lie in $V(G_q)$.  The two edges at $w_a$
therefore project to a two-edge path centred at $a$, and similarly for
$w_b$.  These paths must share an edge by \eqref{eq:FNV-tree-size}.
Their centres are distinct, so their common edge is $ab$.
Consequently, both $w_ab$ and $w_ba$ occur in $C$.  Since $a,b\in A$,
the first requires the orientation $a\to b$, whereas the second requires
$b\to a$.  This contradiction proves the claim.

Next, extend $T_s$ by $T_s(w_a)=w_{T_s(a)}$.  Both $A$ and $B$ are
$T_s$-invariant, and the orientation on $G_q[A]$ is equivariant.  Thus
both rules for added edges are preserved, and the extension is a
fixed-point-free involutory automorphism of $G_q^*$.

We check that it is admissible.  There are no edges within an orbit,
since all mate edges in $G_q$ were deleted and $W$ is independent.
It remains to exclude two distinct orbits with all four possible edges
between them.  For orbits in $V(G_q)$, these four edges would form a
$C_4$ in $G_q$; orbits in $W$ support no edges at all.  In the remaining
case, write the two orbits as
\[
\{w_a,w_{T_s(a)}\}\quad\text{and}\quad\{u,T_s(u)\}.
\]
The assumed edges $w_au$ and $w_aT_s(u)$ imply
$au,aT_s(u)\in E(G_q)$.  Applying $T_s$ gives
$T_s(a)T_s(u),T_s(a)u\in E(G_q)$ as well.  If the base orbits of
$a$ and $u$ are distinct, these give the $4$-cycle
\[
a\,u\,T_s(a)\,T_s(u)\,a
\]
in $G_q$.  If the base orbits coincide, one of the required edges is a
loop or a deleted mate edge.  Both alternatives are impossible, which
proves admissibility.

\emph{Counting and optimization.}
By \Cref{prop:admissible-involution,cor:exact-C6}, the quotient is a
simple $\{C_{-3},C_{+6}\}$-free signed graph $\widehat Q_q^*$.
The added vertices are a copy of $A$, and the added edges are counted by
$Z_A$.  By \eqref{eq:Gq-edges} and \eqref{eq:invariant-selection},
\begin{align*}
v(G_q^*)&=N+|A|=(1+\alpha+o(1))N,\\
e(G_q^*)&=e(G_q)+Z_A=
\left(\frac{1+2\alpha-\alpha^2}{2}+o(1)\right)N^{4/3}.
\end{align*}
Passing to the quotient halves both parameters and proves the first
assertion.  It follows that
\[
\limsup_{n\to\infty}\frac{R_6(n)}{n^{4/3}}
\geq f(\alpha):=
2^{-2/3}\frac{1+2\alpha-\alpha^2}{(1+\alpha)^{4/3}}.
\]
At an interior critical point, logarithmic differentiation gives
\[
\frac{2(1-\alpha)}{1+2\alpha-\alpha^2}
=\frac4{3(1+\alpha)},
\]
which simplifies to $\alpha^2+4\alpha-1=0$.  The unique solution in $(0,1)$ is
$\alpha=\sqrt5-2$.  Here $1+2\alpha-\alpha^2=6\alpha$ and
$1+\alpha=\sqrt5-1$, so the coefficient equals $2^{1/3}c_6$.
The endpoint values $f(0)=2^{-2/3}$ and $f(1)=1/2$ are smaller, so this
is the maximum and the proof is complete.
\end{proof}

\subsection{Consequences and questions}

The larger coefficient requires a nonbipartite underlying graph.  Indeed,
if an $n$-vertex $C_{+6}$-free signed graph has bipartite underlying graph
$G$ with part sizes $a,b$, it contains no triangle.  By \Cref{cor:exact-C6},
its double cover is then a bipartite $C_6$-free graph with part sizes
$2a,2b$ and $2e(G)$ edges.  The bipartite bound in
\cite{MR2227729} gives
\[
2e(G)\leq2^{1/3}\bigl((2a)(2b)\bigr)^{2/3}+16(2a+2b).
\]
Using $ab\leq n^2/4$, we obtain
\begin{equation}\label{prop:bipartite-R6-upper}
e(G)\leq2^{2/3}(ab)^{2/3}+16n\leq2^{-2/3}n^{4/3}+16n.
\end{equation}
The leading coefficient is $2^{-2/3}=0.629960525\ldots$, strictly
smaller than $2^{1/3}c_6$.  Thus the bipartite construction in
\Cref{thm:explicit-C6} and the equivariant construction in
\Cref{thm:equivariant-FNV} necessarily have different underlying structure.

This comparison also gives a quantitative conclusion.  Let
$\operatorname{bipdel}(G)$ denote the minimum number of edges whose
deletion makes $G$ bipartite.  If $G$ is the underlying graph of an
$n$-vertex $C_{+6}$-free signed graph, a minimum such deletion leaves a bipartite
$C_{+6}$-free signed graph with $e(G)-\operatorname{bipdel}(G)$ edges.
Applying \eqref{prop:bipartite-R6-upper} gives
\[
\operatorname{bipdel}(G)\geq e(G)-2^{-2/3}n^{4/3}-16n.
\]
In particular, any sequence with
$e(G)\geq(2^{1/3}c_6-o(1))n^{4/3}$ must satisfy
\[
\operatorname{bipdel}(G)\geq
\left(2^{1/3}c_6-2^{-2/3}-o(1)\right)n^{4/3}
=(0.042673\ldots-o(1))n^{4/3}.
\]
So the nonbipartite part cannot be removed at a lower-order edge cost.

The exact cover formulation also isolates the role of admissible symmetry.
Two disjoint copies of any $n$-vertex $C_6$-free graph have an admissible
involution interchanging corresponding vertices: between two orbits there
is either no edge or one perfect matching.  Hence
\[
2\ex(n,C_6)\leq\exadm(2n,C_6)\leq\ex(2n,C_6).
\]
The second inequality does not say whether the ordinary limsup
coefficient can always be attained with an admissible involution.
The positive double covers in \Cref{thm:equivariant-FNV} attain the ordinary
coefficient $c_6$ with this symmetry.  A better ordinary construction would also improve the signed
coefficient through \eqref{eq:R6-exact} if it had an admissible
involution.  Determining whether this requirement costs anything in the
ordinary hexagon problem remains a natural related question.

\begin{question}\label{ques:R6}
Determine the asymptotic behaviour of $R_6(n)$.  In particular, can
$\limsup_{n\to\infty}R_6(n)/n^{4/3}$ exceed $2^{1/3}c_6$?
\end{question}

Such an improvement would also improve the ordinary $C_6$ lower
coefficient. The positive double covers of an infinite sequence with
$e(\widehat G)\geq(2^{1/3}c_6+\varepsilon)n^{4/3}$ have coefficient
at least $c_6+2^{-1/3}\varepsilon$.  Thus the exact correspondence
\eqref{eq:R6-exact} also indicates the difficulty of improving the
second construction.

\section{Balanced decagons and octagon examples}\label{sec:further-cycles}

We apply the cloning construction to a bipartite graph of girth at least
$12$, obtaining $n$-vertex $C_{+10}$-free signed graphs with
$\Omega(n^{6/5})$ edges.  The same construction gives an octagon example, while the last
result shows why theta-freeness does not by itself provide a suitable
signing at the conjectural octagon density.

\subsection{The decagon construction}

Let $q$ be an odd prime.  The graph $\Gamma_5(q)$ has point part
$P=\FF_q^5$ and line part $L=\FF_q^5$, with
$p=(p_1,p_2,p_3,p_4,p_5)$ adjacent to
$\ell=[\ell_1,\ell_2,\ell_3,\ell_4,\ell_5]$ when
\begin{equation}\label{eq:split-cayley}
\begin{aligned}
p_2+\ell_2&=p_1\ell_1, & p_3+\ell_3&=p_1\ell_2,\\
p_4+\ell_4&=p_1\ell_3, & p_5+\ell_5&=p_2\ell_3-p_3\ell_2.
\end{aligned}
\end{equation}
This is the biaffine generalized hexagon in
\cite[Section~4.1]{LazebnikWang2026}, after exchanging the point and line
parts and negating both fifth coordinates.  In particular, it is
$q$-regular with $q^5$ vertices in each part and girth at least $12$.
Consider the maps
\begin{align*}
\alpha_P(p_1,p_2,p_3,p_4,p_5)&=(-p_1,-p_2,p_3,-p_4,-p_5),\\
\alpha_L(\ell_1,\ell_2,\ell_3,\ell_4,\ell_5)
&=(\ell_1,-\ell_2,\ell_3,-\ell_4,-\ell_5).
\end{align*}
They preserve adjacency, as follows directly from \eqref{eq:split-cayley}.
Their fixed-point sets are
$P_0=\{(0,0,a,0,0):a\in\FF_q\}$ and
$L_0=\{[b,0,c,0,0]:b,c\in\FF_q\}$.  Every neighbour of
$(0,0,a,0,0)\in P_0$ lies in $L_0$, since the equations force
$\ell_2=\ell_4=\ell_5=0$ and $\ell_3=-a$.  Delete $P_0\cup L_0$ and call
the resulting graph $B_5(q)$.  By regularity and the preceding observation,
\begin{equation}\label{eq:B5-parameters}
|P(B_5)|=q^5-q,\qquad |L(B_5)|=q^5-q^2,\qquad e(B_5)=q^6-q^3.
\end{equation}
The restricted involutions are fixed-point-free.  Also, a point $p$ cannot
be adjacent to both $\ell$ and $\alpha_L(\ell)$: the involution preserves
$\ell_1$, and $p,\ell_1$ determine the remaining line coordinates
successively.  This would force $\ell=\alpha_L(\ell)$, but all such
lines were deleted.

\begin{theorem}\label{thm:explicit-C10}\label{cor:all-n-C10}
For every odd prime $q$, there is a simple signed graph $\widehat Q_5(q)$
with $\DC(\widehat Q_5(q))\cong B_5(q)[4]$,
\[
v(\widehat Q_5(q))=\frac{5q^5-q^2-4q}{2},\qquad
e(\widehat Q_5(q))=2(q^6-q^3),
\]
and no $C_{-5}$ or $C_{+10}$.  Consequently,
\[
\hex(n,C_{+10})\geq
\left(\frac{4\cdot2^{1/5}}{5^{6/5}}-o(1)\right)n^{6/5}.
\]
\end{theorem}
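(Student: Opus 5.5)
The plan is to apply \Cref{thm:equivariant-cloning} with $k=5$ and $r=4$ to the bipartite graph $B=B_5(q)$, taking $X=P(B_5)$ as the cloned side and $Y=L(B_5)$, with the restricted involutions $\alpha_P$ and $\alpha_L$. First I would check the hypotheses. The girth of $B_5(q)$ is at least $12>10$, because deleting vertices cannot create cycles and $\Gamma_5(q)$ has girth at least $12$. Both restricted involutions are fixed-point-free on $P\setminus P_0$ and $L\setminus L_0$. These sets are invariant, since the complements of fixed-point sets are invariant under an involution. Condition (i) of \Cref{thm:equivariant-cloning} is the adjacency-preservation already noted from \eqref{eq:split-cayley}. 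Condition (ii) is the observation preceding the statement: $\alpha_L$ fixes $\ell_1$, and $p,\ell_1$ determine $\ell$ successively, so $\ell=\alpha_L(\ell)\in L_0$, which has been deleted.

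\Cref{thm:equivariant-cloning} then yields $\widehat Q_5(q)$ with bipartite underlying graph and $\DC(\widehat Q_5(q))\cong B_5(q)[4]$. The vertex and edge counts come from \eqref{eq:B5-parameters}:
\[
v=\frac{4(q^5-q)+q^5-q^2}{2}=\frac{5q^5-q^2-4q}{2},\qquad e=\frac{4(q^6-q^3)}{2}=2(q^6-q^3).
\]
The same theorem shows that the quotient has no $C_{+10}$. It has no $C_{-5}$ because the underlying graph is bipartite and contains no odd cycles. Alternatively, a negative $5$-cycle would lift to a $10$-cycle in the $C_{10}$-free cover.

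For the asymptotic bound, put $n_q=(5q^5-q^2-4q)/2$, so $q=(1+o(1))(2n_q/5)^{1/5}$. Then
\[
e=(2+o(1))q^6=(2+o(1))(2/5)^{6/5}n_q^{6/5}=\left(\frac{4\cdot2^{1/5}}{5^{6/5}}+o(1)\right)n_q^{6/5},
\]
since $2\cdot2^{6/5}=4\cdot2^{1/5}$. For general large $n$, I would choose an odd prime $q\le(2n/5)^{1/5}$ with $q=(1-o(1))(2n/5)^{1/5}$ by the prime number theorem, which keeps $n_q\le n$, and then add isolated vertices. This is the same density-of-primes step as in \Cref{thm:explicit-C6}.

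I expect the main obstacle to be checking hypothesis (ii) and the fixed-point computation rigorously, in particular confirming that each point has a unique neighbour for each value of $\ell_1$. From \eqref{eq:split-cayley}, $\ell_2=p_1\ell_1-p_2$, then $\ell_3$, $\ell_4$, $\ell_5$ are determined in turn, so this holds. The remaining steps are routine, so everything else reduces to the already established \Cref{thm:equivariant-cloning}.
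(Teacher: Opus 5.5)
Your proposal is correct and follows the same route as the paper: apply \Cref{thm:equivariant-cloning} with $k=5$, $r=4$ to $B_5(q)$, cloning the point side, and read off the counts from \eqref{eq:B5-parameters}. The remaining steps also match: bipartiteness excludes $C_{-5}$, and you pass to all $n$ by choosing a prime $q\le(2n/5)^{1/5}$ and adding isolated vertices.
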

\begin{proof}
The graph $B_5(q)$ and its involutions satisfy
\Cref{thm:equivariant-cloning}.  Apply it with $k=5$ and $r=4$.
Together with \eqref{eq:B5-parameters}, this gives the stated double cover
and the vertex and edge counts.
The cover is $C_{10}$-free, so the signed quotient is $C_{+10}$-free.
Its underlying graph is bipartite and hence also $C_{-5}$-free.

The quotient has $n_q=(5/2+o(1))q^5$ vertices and $(2-o(1))q^6$ edges.
For every sufficiently large $n$, choose an odd prime $q\leq(2n/5)^{1/5}$ with
$q=(1-o(1))(2n/5)^{1/5}$, as in the proof of
\Cref{thm:C4-asymptotic}, and add isolated vertices.  This gives the
asserted coefficient $2(2/5)^{6/5}=4\cdot2^{1/5}/5^{6/5}$.
\end{proof}

Taking $k=4$ and $r=3$ instead gives a signed graph $\widehat Q_4(q)$
whose double cover is $B_5(q)[3]$.  The quotient has $(4q^5-q^2-3q)/2$ vertices
and $3(q^6-q^3)/2$ edges.  Its double cover is $C_8$-free.  Thus the quotient has
neither a balanced $8$-cycle nor a negative $4$-cycle, since the latter
also lifts to a simple $8$-cycle by traversing the two sheets.  The same
prime interpolation gives
\[
\hex(n,C_{+8})\geq
\left(\frac{3}{2^{11/5}}-o(1)\right)n^{6/5}.
\]
The exponent $6/5$ is the same as in the ordinary one-sided-cloning
construction \cite{LUW1999}.  It remains below the conjectured $5/4$;
by \eqref{eq:exponent-equivalence}, attaining $5/4$ would also settle
the ordinary octagon exponent.

\subsection{A theta-free graph with an odd dependence}

For an odd prime power $q$, let $\mathcal W_q$ be the graph on $\FF_q^4$
in which distinct vertices $v,w$ are adjacent when
\begin{equation}\label{eq:VW-adjacency}
w_2=-v_2+v_1w_1,\qquad
w_3=-v_4+v_1^2w_1,\qquad
w_4=-v_3+v_1w_1^2.
\end{equation}
Verstra\"ete and Williford \cite{VW2019} proved that $\mathcal W_q$ is
$\Theta_{4,3}$-free and has $q^4$ vertices and $(q^5-q^2)/2$ edges.
Thus their edge count has the conjectured octagon exponent, and they satisfy the
necessary theta condition.  Nevertheless, their $8$-cycles can have an
odd dependence.

\begin{proposition}\label{prop:VW-octagon-obstruction}
If $q\geq11$ is an odd prime power with $q\equiv2\pmod3$, then
$\mathcal W_q$ has no signing in which every $8$-cycle is unbalanced.
\end{proposition}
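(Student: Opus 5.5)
The plan is to exhibit an explicit subgraph of $\mathcal W_q$ that is a subdivision of $K_4$ of the type in \Cref{prop:K4-subdivision-obstruction} with $k=4$. That is, we look for a subdivided $K_4$ in which each perfect matching's two replacement paths have total length $4$. Since $\mathcal A_8$ is closed under subgraphs, this gives $\mathcal W_q\notin\mathcal A_8$. The most symmetric choice is a subdivided $K_4$ in which every replacement path has length $2$ (a ``$K_4$ with every edge subdivided once''). Then each pair of matchings gives an $8$-cycle, and the three $8$-cycles form an odd dependence. This subgraph has $10$ vertices: four branch vertices $b_1,\dots,b_4$ and six subdivision vertices $s_{ij}$. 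Each $s_{ij}$ is a common neighbour of $b_i,b_j$, and all ten vertices are distinct. An alternative with fewer constraints uses paths of lengths $1$ and $3$ for some matchings, but the uniform version is the one I would try first.

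\emph{Step 1: common neighbours.} From \eqref{eq:VW-adjacency}, a neighbour $w$ of $v$ is determined by $v$ and $w_1=t$:
\[
w(v,t)=(t,\,-v_2+v_1t,\,-v_4+v_1^2t,\,-v_3+v_1t^2).
\]
Vertices $v,v'$ share the neighbour $w(v,t)=w(v',t)$ exactly when
\[
v_2-v_2'=(v_1-v_1')t,\qquad v_4-v_4'=(v_1^2-v_1'^2)t,\qquad v_3-v_3'=(v_1-v_1')t^2.
\]
So each pair of branch vertices must satisfy three polynomial conditions in one free parameter $t_{ij}$.

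\emph{Step 2: reduce to a system.} I would normalize, using the translations and scalings of the Verstraete--Williford graph, by putting $b_1=0$ and choosing the $b_i$ with prescribed first coordinates $a_i$. Writing $d_{ij}=a_i-a_j$, the conditions for the six pairs become linear and quadratic constraints on the coordinates $b_{i,2},b_{i,3},b_{i,4}$ in terms of the $t_{ij}$. The second-coordinate conditions force $b_{i,2}-b_{j,2}=d_{ij}t_{ij}$ around triangles, and similarly for the other coordinates. This gives cocycle conditions on each triangle $ijk$:
\[
d_{ij}t_{ij}+d_{jk}t_{jk}+d_{ki}t_{ki}=0,
\]
together with analogous conditions weighted by $a_i+a_j$ and with $t^2$. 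Counting parameters (the $a_i$ and the six $t_{ij}$) against the constraints, I expect a one- or zero-dimensional family. I expect it to be solvable precisely when some cubic-type equation, such as $x^3=c$ or the existence of a primitive cube root of unity, behaves appropriately. This is where $q\equiv2\pmod3$ should enter: cubing is a bijection on $\FF_q$, so the needed cube roots exist and are unique.

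\emph{Step 3: check simplicity.} Verify that the ten resulting vertices are distinct, and that no $s_{ij}$ coincides with a branch vertex or with another $s_{kl}$. The bound $q\geq11$ should serve to avoid finitely many degenerate parameter values, such as $d_{ij}=0$, $t_{ij}=t_{ik}$, or small characteristic coincidences. Then apply \Cref{prop:K4-subdivision-obstruction} with $k=4$.

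The main obstacle is Step 2: finding an explicit consistent solution of the triangle conditions. If the uniform length-$2$ configuration turns out to be inconsistent (e.g.\ because of the $\Theta_{4,3}$-freeness), I would fall back to mixed path lengths $(1,3)$ on some matchings, or search directly for three $8$-cycles covering each edge twice in some other pattern. I would guide the choice of $a_i$ by making $\{a_i\}$ invariant under an affine map of order $3$, which is natural given the mod-$3$ hypothesis.
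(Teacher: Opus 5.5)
Your reduction matches the paper's: exhibit a subdivision of $K_4$ in $\mathcal W_q$ in which every perfect matching has total replacement length $4$, and apply \Cref{prop:K4-subdivision-obstruction}. The gap is that no such subgraph is ever produced, and the configuration you commit to cannot exist.

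Your Step~1 equations show that two distinct vertices $v,v'$ with a common neighbour have $v_1\neq v_1'$. Combining the second- and fourth-coordinate conditions gives $v_4-v_4'=(v_1+v_1')(v_2-v_2')$. Take three branch vertices $b_i,b_j,b_k$ with first coordinates $a_i,a_j,a_k$ and second coordinates $u_i,u_j,u_k$. Summing this relation around the triangle gives $\sum_{\mathrm{cyc}}(a_i+a_j)(u_i-u_j)=0$, which is exactly the condition that $(a_i,u_i),(a_j,u_j),(a_k,u_k)$ are collinear. Hence $t_{ij}=t_{ik}$, and so $s_{ij}=w(b_i,t_{ij})=w(b_i,t_{ik})=s_{ik}$. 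Thus $\mathcal W_q$ contains no $6$-cycle at all. The uniformly subdivided $K_4$ contains the hexagon $b_1s_{12}b_2s_{23}b_3s_{13}b_1$, so it does not occur, and your Step~2 system has only collapsed solutions rather than a small nondegenerate family.

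Your fallback with mixed lengths $(1,3)$ has the right shape, but it is only named. As a result, the hypotheses $q\equiv2\pmod3$ and $q\geq11$ never enter in a verified way. Note also that $\FF_q$ has no primitive cube root of unity when $q\equiv2\pmod3$. The paper uses a star plus a subdivided triangle:
\begin{itemize}
\item the centre $a=(0,0,0,1)$ is adjacent to $b_s=(s,0,-1,0)$ for $s\in\{x,y,z\}$;
\item each pair $\{s,t\}$ is joined by the path $b_s,\,(r,sr,s^2r,1+sr^2),\,(r,tr,t^2r,1+tr^2),\,b_t$ with $r=s+t$.
\end{itemize}
The middle edge holds exactly when $rst=1$, which holds for all three pairs as soon as $x,y,z$ are distinct, nonzero, $x+y+z=0$ and $xyz=-1$. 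Such values are $x=\lambda u$, $y=u$, $z=-(\lambda+1)u$ with $u^3=[\lambda(\lambda+1)]^{-1}$. This is solvable because cubing is bijective on $\FF_q^\times$, and the bound $q\geq11$ leaves a choice of $\lambda\notin\{0,\pm1,-2,-1/2\}$. Every perfect matching then has lengths $1+3$. The path interiors are pairwise disjoint because their first coordinates are $-x,-y,-z$, and they avoid the branch vertices because their second coordinates are nonzero. Some explicit configuration of this kind is the missing content of your proof.
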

\begin{proof}
Choose $\lambda\in\FF_q\setminus\{0,-1,1,-2,-1/2\}$.  Since
$\gcd(3,q-1)=1$, the cube map on $\FF_q^\times$ is bijective.  Choose
$u$ with $u^3=[\lambda(\lambda+1)]^{-1}$, and put
$x=\lambda u$, $y=u$, and $z=-(\lambda+1)u$.  The excluded values of
$\lambda$ ensure that these are nonzero and pairwise distinct, and
\[
x+y+z=0,\qquad xyz=-1.
\]
Let $a=(0,0,0,1)$ and $b_s=(s,0,-1,0)$ for $s\in\{x,y,z\}$.
Equation \eqref{eq:VW-adjacency} gives the three edges $ab_s$.
For distinct $s,t\in\{x,y,z\}$, put $r=s+t$.  We claim that
\[
b_s,\quad (r,sr,s^2r,1+sr^2),\quad
(r,tr,t^2r,1+tr^2),\quad b_t
\]
is a path of length three.  The first and last edges follow by direct
substitution.  For the middle edge, $r$ is the negative of the remaining
element of $\{x,y,z\}$, so $rst=1$.  The second-coordinate equation uses
$sr+tr=r^2$.  The other two follow from
$t^2r+sr^2=r^3-1$ and $s^2r+tr^2=r^3-1$, respectively; both identities
use $r=s+t$ and $rst=1$.

For the three unordered pairs $\{s,t\}$, the interior vertices have
first coordinates $-x,-y,-z$, respectively.  Thus the paths have
pairwise disjoint interiors.  Within each path the two interior vertices
are distinct because $s\ne t$ and $r\ne0$.  Their second coordinates
are nonzero, whereas $a,b_x,b_y,b_z$ have second coordinate zero, so no
interior vertex is a branch vertex.  We have therefore found a subdivision
of $K_4$ with three length-one paths $ab_s$ and three length-three paths
between the pairs $b_s,b_t$.  Every perfect matching has total replacement
length $1+3=4$.  The result follows from
\Cref{prop:K4-subdivision-obstruction}.
\end{proof}

For example, $q=5^{2j+1}$ with $j\geq1$ gives an infinite sequence to
which the proposition applies.  Signing the whole graph $\mathcal W_q$
therefore cannot give a balanced-octagon-free construction along this
sequence.  It remains possible that sufficiently dense subgraphs avoid
the parity obstruction.

\section*{Acknowledgements}

The authors thank Professor Xuding Zhu for helpful discussions. The first author
was partially supported by the National Natural Science Foundation of China
(Grant No.~12371359). The second author
was partially supported by the National Natural Science Foundation of China
(Grant No.~12571367).

\section*{Declaration on the use of AI-assisted tools}

AI-assisted tools were used to improve the language and organization of this
manuscript and to assist with checking calculations and arguments. The authors
take full responsibility for the contents of the paper.

\end{document}